\newcommand{\N}{\mathbb{N}}
\newcommand{\Z}{\mathbb{Z}}
\newcommand{\R}{\mathbb{R}}
\newcommand{\RR}{\mathcal{R}}
\newcommand{\RRand}{\mathcal{\RR}_{\textsf{RAND}}}
\newcommand{\eps}{\epsilon}
\newcommand{\half}{\frac{1}{2}}
\newcommand{\seq}{\subseteq}
\newcommand{\norm}[1]{\left\| #1 \right\|}
\newcommand{\E}{{\mathbb E}}
\newcommand{\girth}{\textrm{girth}}
\newtheorem{theorem}{Theorem}[section]
\newtheorem{corollary}[theorem]{Corollary}
\newtheorem{proposition}[theorem]{Proposition}
\newtheorem{lemma}[theorem]{Lemma}
\newtheorem{claim}[theorem]{Claim}
\newtheorem{observation}[theorem]{Observation}
\newtheorem{conjecture}[theorem]{Conjecture}
\newtheorem{remark}[theorem]{Remark}
\begin{document}

\title{Greedy Random Walk}
\author{
    Tal Orenshtein
    \thanks{Department of Mathematics,
      Weizmann Institute of Science, Rehovot, {\sc Israel}.
      {\tt tal.orenshtein@weizmann.ac.il}}
    \and
    Igor Shinkar
    \thanks{Department of Computer Science,
      Weizmann Institute of Science, Rehovot, {\sc Israel}.
      {\tt igor.shinkar@weizmann.ac.il}}
}
\date{}

\maketitle

\abstract
We study a discrete time self interacting
random process on graphs, which we call Greedy Random Walk.
The walker is located initially at some vertex. As time evolves,
each vertex maintains the set of adjacent edges touching it that have not been
crossed yet by the walker. At each step, the walker being at some vertex,
picks an adjacent edge among the edges that have not traversed
thus far according to some (deterministic or randomized) rule.
If all the adjacent edges have already been traversed,
then an adjacent edge is chosen uniformly at random. After picking an
edge the walk jumps along it to the neighboring vertex.
We show that the expected edge cover time of the greedy random walk is
linear in the number of edges for certain natural families
of graphs. Examples of such graphs include the complete graph,
even degree expanders of logarithmic girth, and the hypercube graph.
We also show that GRW is transient in $\Z^d$ for all $d \geq 3$.

%%%%%%%%%%%%%%%%%%%%%%%%%%%%%%%%%%%%%%%%%%%%%%%%%%%%%%%%%%%%%%%%%%%%%%%%%%%%%%%%%%%%%%%%%%
\section{Introduction}\label{sec:intro}
%%%%%%%%%%%%%%%%%%%%%%%%%%%%%%%%%%%%%%%%%%%%%%%%%%%%%%%%%%%%%%%%%%%%%%%%%%%%%%%%%%%%%%%%%%

    {\it Greedy Random Walk} (GRW) on a graph is a discrete time
    random process, with transition law defined as follows.
    The walker is located initially at some vertex of the graph.
    As time evolves each vertex in the graph maintains the set of all
    adjacent edges that the walker has not crossed yet.
    At each step the walker picks an unvisited edge among the edges
    adjacent to its current location arbitrarily according to some rule.
    If all the adjacent edges have already been visited, an adjacent
    edge is picked uniformly at random.
    The walker, then, jumps to a neighboring vertex along the chosen edge.
    We think of the process as trying to cover the graph as fast as possible
    by using a greedy rule that prefers to walk along an unvisited edge whenever possible.
    This suggests the name {\em Greedy Random Walk}.
    \medskip

    Formally, for an undirected graph $G=(V,E)$ a GRW with a (possibly randomized)
    rule $\RR$ on $G$ is a sequence $X_0,X_1,X_2, \dots$ of random variables defined on $V$
    with the following transition probabilities.
    For each $t \geq 0$ define
    \begin{equation}\label{eq:H_t}
        H_t = \{ (X_{s-1},X_{s}) \in E : 0 < s \leq t\}
    \end{equation}
    to be the set of all the edges traversed by the walk up to time $t$.
    For every vertex $v \in V$ and time $t \geq 0$ define
    \begin{equation}\label{eq:J_t(v)}
        J_t(v) = \{e \in E : v \in e \mbox{ and } e \notin H_t\}
    \end{equation}
    to be the set of all the edges touching $v$ that have not been traversed by the walk up to time $t$.
    Denoting by $N_v$ is the set of neighbors of $v$ in $G$,
    the transition probabilities are given by:
    \[
        \Pr[X_{t+1} = w | ( X_i )_{i \leq t} ] =
            \begin{cases}
                \RR( w | ( X_i )_{i \leq t} )    &   J_t(X_t) \neq \emptyset \mbox{ and } \{X_t,w\} \in J_t(X_t) \\
                \frac{1}{|N_{X_t}|}     &   J_t(X_t) = \emptyset \mbox{ and } w \in N_{X_t} \\
                0                       &   \mbox{ otherwise },
            \end{cases}
    \]
    where $\RR( w | ( X_i )_{i \leq t} )$ denotes the probability of choosing $w \in N_{X_t}$
    conditioned on the information regarding the process so far.
    A natural rule $\RR$ is to choose uniformly at random an edge among
    the adjacent unvisited edges $J_t(v)$ of the current vertex $v=X_t$.
    We shall denote this rule by $\RRand$.
    \medskip

    One can think of GRW as a random walk where the walker wishes to cover
    the graph as fast as possible and is allowed to make some local computation
    at each vertex she visits (e.g., mark the last edge that the walker used
    to reach the current vertex, and also mark the edge that the walker is
    going to use in the next step), but is not allowed to transfer information
    between vertices. A motivation for the study of GRW arises from distributed
    computation in which an agent sits on every vertex of a graph.
    Each agent has a list of neighbors and is allowed to communicate only with them.
    The goal is to let all the agents use some resource as fast as possible,
    while using only the local information for each vertex, and no extra information
    regarding the graph and the vertices that have already been visited.
    An agent has a list of neighbors who communicated with him thus far during the process,
    and each time the agent receives the resource, she is allowed to perform only local
    computations before moving it to one of her neighbors.
    We will see that the GRW protocol performs better than simple random
    walk (SRW) on some families of graphs.

    The main difficulty in analyzing such random process comes from the fact that
    GRW is self-interacting, i.e., is not a Markov chain (meaning that the
    probability distribution of the next step depends not only on the current
    position of the walker, but also on the entire walk thus far). Although
    in many cases a certain property of self interacting random walks
    can be observed in simulations or seems to be
    suggested by ``heuristical proof'', typically it is much harder to give robust
    proofs for random walks that do not have the Markov property.
    Related models include
    RW with choice~\cite{AK06}
    non-backtracking RW~\cite{ABLS06},
    RW with neighborhood exploration~\cite{BCERS},
    excited RW~\cite{BW03}
    reinforced RW~\cite{Pem07},
    rotor router RW~\cite{FrSa10},
    and more.
    Recently this model has been considered independently by Berenbrink et al.~\cite{BCF}.
    They showed that if $G$ is an even degree expander graph such that
    every vertex is contained in a vertex-induced cycle of logarithmic length,
    then the expected vertex cover time by GRW is linear for any rule $\RR$.

%%%%%%%%%%%%%%%%%%%%%%%%%%%%%%%%%%%%%%%%%%%%%%%%%%%%%%%%%%%%%%%%%%%%%%%%%%%%%%%%%%%%%%%%%%
\subsection{Our Results}\label{sec:our results}
%%%%%%%%%%%%%%%%%%%%%%%%%%%%%%%%%%%%%%%%%%%%%%%%%%%%%%%%%%%%%%%%%%%%%%%%%%%%%%%%%%%%%%%%%%

    In Section~\ref{sec:edge_cover_time} we study the edge cover time of GRW on finite graphs.
    Obviously, the edge cover time of any graph $G=(V,E)$ is at least $|E|$,
    as the walker must cross every edge at least once.
    We prove bounds on the edge cover time of GRW
    by analyzing the ``overhead'' of the walk, i.e.,
    the difference between the expected edge cover time of the walk,
    and the number of edges in a graph.
    For example, we establish that the expected time it takes
    for GRW to go via all edges of $K_n$, the complete graph on $n$ vertices
    is ${n \choose 2} + (1 + o(1))n \log(n)$. Therefore, the aforementioned
    ``overhead'' in the case of $K_n$ is $(1 + o(1))n \log(n)$.
    In particular, all edges of $K_n$ are covered by GRW in time
    is $(1 + o(1)) \cdot {n \choose 2}$, which is asymptotically faster
    than $\Theta(n^2 \log n)$, the expected edge cover time of SRW.

    We show that for certain families of graphs the expected
    edge cover time of GRW is asymptotically faster than that of SRW.
    In particular, we establish that
    expected edge cover time of GRW is \emph{linear} in the number of edges
    for the complete graph, for the hypercube graph,
    and for constant even degree expanders with logarithmic girth.
    The later result is claimed in the paper of Berenbrink et al.~\cite{BCF}
    without proof.

    Another interesting result is given in Lemma~\ref{lemma:SRW cover time bound}
    that bounds the \emph{edge} cover time of an even degree graph by GRW
    in terms of its \emph{vertex} cover time by SRW. Specifically, we show
    that for any graph $G = (V,E)$ whose vertices have even degrees,
    and its expected \emph{vertex} cover time by SRW is $C$ it holds that
    the expected \emph{edge} cover time of $G$ using GRW is at most $|E| + C$.
    Therefore, for even degree graphs of logarithmic degree, whose vertex cover time is $O(n \log(n))$
    we obtain a bound on the edge cover time which is linear in the number of edges.

    These result should be compared with the general lower bound
    on the expected cover time of graphs by SRW.
    Recall that Feige \cite{Feige95lowerbound} has shown that
    for any graph with $n$ vertices the expected vertex cover time
    by a simple random walk is at least $(1 - o(1))n \log n$.
    Analogously, for all graphs the expected edge cover cover
    is at least $\Omega(|E| \log(|E|))$ (see \cite{Zuckerman90lowerbound}, \cite{aldous1989lowerbd}).
    In this direction, a result of Benjamini, Gurel-Gurevich and Morris \cite{BGM}
    says that for bounded degree graphs linear cover time is exponentially unlikely.

    We are also interested in the behavior of GRW on infinite graphs.
    It is well known that SRW on $\Z^d$ is transient if $d \geq 3$, and recurrent otherwise.
    We prove that GRW is transient on $\Z^d$ for $d \geq 3$.
    The case of $d=2$ remains open, and it is shown to be equivalent the notorious
    two dimensional mirror model problem \cite{Ruijgrok,BT92}.
    Our proof holds for all graphs with even degrees on which SRW is transient.
    This leaves unsolved the question of transience of GRW in latices with odd degrees.
    These and other related results are discussed in Section~\ref{sec:Z^d}, which can be read independently
    of the rest of the paper.

    \paragraph{General remarks:}
    \begin{description}
    \item[The choice of the rule $\RR$:]
        In the first version of this paper we considered GRW that uses only the rule $\RRand$.
        After the first version of our work was uploaded to \texttt{arxiv.org},
        Berenbrink et al.~\cite{BCF} independently published their work in which
        they consider GRW with \emph{any} (deterministic or randomized) rule,
        even adversarial ones that try to slow-down the process.
        After reading their results we have noticed that in fact our
        proofs for upper bounding the edge cover time are independent
        of $\RR$ and hold for any rule as well.
    \item[The choice of the starting vertex:]
        In all of our results on cover time the bounds are independent of the
        starting vertex. Also, in most cases the considered graphs are
        vertex-transitive, and so, specification of the starting vertex is unnecessary.
    \end{description}

%%%%%%%%%%%%%%%%%%%%%%%%%%%%%%%%%%%%%%%%%%%%%%%%%%%%%%%%%%%%%%%%%%%%%%%%%%%%%%%%%%%%%%%%%%
\subsection{Notation}\label{sec:notation}
%%%%%%%%%%%%%%%%%%%%%%%%%%%%%%%%%%%%%%%%%%%%%%%%%%%%%%%%%%%%%%%%%%%%%%%%%%%%%%%%%%%%%%%%%%
    We use the standard notations of asymptotic growth rates.
    For two functions $f,g : \N \to \R_+$
    we write $f = O(g)$, when there is a positive constant $C \in \R$ such that $f(n) < C g(n)$ for all sufficiently large values of $n$.
    The notation $f = \Omega(g)$ means there is a positive constant $c > 0$ such that $f(n) > c g(n)$
    for all sufficiently large values of $n$, and $f = \Theta(g)$ means both $f = O(g)$ and $f = \Omega(g)$.
    We write $f=o(g)$ if $\lim_{n \to \infty}\frac{f(n)}{g(n)} = 0$.

%%%%%%%%%%%%%%%%%%%%%%%%%%%%%%%%%%%%%%%%%%%%%%%%%%%%%%%%%%%%%%%%%%%%%%%%%%%%%%%%%%%%%%%%%%
\section{Edge Cover Time of Finite Graphs}\label{sec:edge_cover_time}
%%%%%%%%%%%%%%%%%%%%%%%%%%%%%%%%%%%%%%%%%%%%%%%%%%%%%%%%%%%%%%%%%%%%%%%%%%%%%%%%%%%%%%%%%%

    In GRW the choice of the next move depends on the history of the walk with
    respect the adjacent edges of the current vertex.
    Hence, it seems more natural to ask about the edge cover time, rather than vertex cover time.
    We show that for some common families of graphs the greedy walk covers the edges
    asymptotically faster than the simple random walk.

    Let $G=(V,E)$ be a connected undirected graph on $n$ vertices.
    Denote by $C_E(G)$ the edge cover time of GRW, i.e., the number of steps it takes for GRW
    to traverse all edges of $G$. Note that since the graph $G$ is finite,
    the edge cover time $C_E(G)$ is a.s.\ finite.

    The basic idea behind the analysis is as follows.
    Divide the random discrete time interval $[0,C_E(G)]$ in two (random) parts:
    \begin{enumerate}
        \item The greedy part: all times in which the walker is at a vertex, that has an adjacent edge, yet to be covered,
                i.e., all times $t \in [0,C_E(G)]$ such that $\{ X_t, X_{t+1} \} \notin H_t$.
        \item The simple part: all times in which the walker is positioned at a vertex
                all of whose adjacent edges have already been covered previously,
                i.e., all times $t \in [0,C_E(G)]$ such that $\{ X_t, X_{t+1} \} \in H_t$.
                In these times the choice of the next move has the same distribution as the one of a simple random walk.
    \end{enumerate}

    Roughly speaking, the GRW typically looks as follows.
    It starts at $t_0 = 0$ in a greedy time part.
    This time part lasts until
    reaching at time $s_1$ a vertex $v_1$ whose all adjacent edges have already been covered.
    We say in this situation that the walk got stuck.
    This means that the last step before getting stuck covered the last edge touching $v_1$.
    Since at time $s_1$ all edges touching $v_1$ have already been covered,
    the walker picks an edge at random among these edges.
    In other words the walk is now in a simple time part, started at time $s_1$.
    This time part lasts until the walker reaches at time $t_1$ a vertex $u_1$,
    that has an adjacent edge which has not been covered yet.
    By definition, the next step will belong to a greedy part, and will continue until reaching at time $s_2$
    some vertex $v_2$, whose all adjacent edges have already been covered, thus starting the second simple part.
    The walk continues in this way until all edges are covered, and then becomes a simple random walk.

    Formally, define the times $t_0,s_1,t_1,s_2,t_2,\dots,s_n$ recursively,
    where the intervals $[t_{i-1},s_i)$ denote the $i$th greedy part, and
    the intervals $[s_i,t_i)$ denote the $i$th simple part of the walk.
    \[
        t_0  =  0,
    \]
    \[
        s_{i+1}  =
        \begin{cases}
            \inf  \{t_i < t \leq C_E(G)  : J_t(X_t)  = \emptyset\} & \mbox{ if there is such }t \\
            C_E(G) & \mbox{ otherwise },
        \end{cases}
    \]
    \[
        t_{i+1}  =
        \begin{cases}
             \inf \{s_{i+1} < t \leq C_E(G) : J_t(X_t)  \neq \emptyset\} & \mbox{ if there is such }t \\
            C_E(G) & \mbox{ otherwise }.
        \end{cases}
    \]

    We say the walk \emph{got stuck} at time $t$ if $t = s_i$ for some $i \in \N$.
    It should be clear from the description, that the vertices $X_{s_i}$ must all be distinct,
    as $X_{s_i}$ is the $i$th time that the walk got stuck, and it is impossible to
    get stuck in the same vertex twice. Therefore, it is enough to define the times $t_i$ and $s_i$
    only for $i \leq n$ (where $n$ denotes the number of vertices in $G$).
    This gives a random partition $(0=t_0 < s_1 < t_1 < s_2 < t_2 <\dots < t_{k-1} < s_k = t_k = \dots =s_n=C_E(G))$
    of the time segment $[0, C_E(G)]$, where the random variable $k \leq n$ is the first $i$
    for which $s_i=C_E(G)$, i.e., all edges of $G$ are covered.

    Note that the total time the walker spends in the greedy parts equals to the number of edges $|E|$,
    implying the following expression on the edge cover time.
    \[
        C_E(G) = |E| + \sum_{i=1}^{n} (t_i - s_i).
    \]
    By linearity of expectation we have the following simple expression
    for the expected edge cover time, which will be the key formula in our proofs.
    \begin{proposition}[Key formula]
        \label{prop:formula}
        Let $G = (V,E)$ be a graph with $n$ vertices, and let $t_0,s_1,t_1,s_2,t_2,\dots$
        be random times as above. Then, the expected edge cover time of GRW on $G$ is
        \begin{equation}\label{eq:formula}
            \E [C_E(G)] =  |E| +  \sum_{i=1}^{n} \E[t_i - s_i].
        \end{equation}
    \end{proposition}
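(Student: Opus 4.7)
The plan is to decompose the time interval $[0, C_E(G)]$ into the greedy parts $[t_{i-1}, s_i)$ and the simple parts $[s_i, t_i)$ as defined in the excerpt, and then to show the total length of the greedy parts is deterministically equal to $|E|$. Once this is established, the formula follows immediately from linearity of expectation.

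First, I would verify that the random times $0 = t_0 < s_1 < t_1 < s_2 < \cdots$ partition $[0, C_E(G)]$. This is essentially already argued in the discussion preceding the proposition: the walk alternates between the two phases, and once some $s_i$ equals $C_E(G)$ the walk has covered all edges and all subsequent $t_j, s_j$ are set to $C_E(G)$ by the $\inf$ convention. Thus we obtain the identity
\[
C_E(G) = \sum_{i=1}^{n} (s_i - t_{i-1}) + \sum_{i=1}^{n} (t_i - s_i),
\]
where the first sum is the total time spent in greedy parts, and the second sum is the total time spent in simple parts.

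The main step is to show $\sum_{i=1}^{n}(s_i - t_{i-1}) = |E|$ almost surely. For any time $t$ in a greedy part, by definition $J_t(X_t) \neq \emptyset$, and the transition rule forces $\{X_t, X_{t+1}\} \in J_t(X_t)$, i.e., the edge used at step $t+1$ has not been traversed before. Hence each greedy step traverses a previously uncrossed edge, so distinct greedy steps produce distinct edges. Conversely, every edge of $G$ is crossed for the first time during some greedy step, since at the moment of its first traversal it belongs to $J_t(X_t)$ and therefore $t$ satisfies the greedy condition. Because $C_E(G)$ is by definition the first time every edge has been crossed at least once, the number of greedy steps in $[0, C_E(G)]$ equals exactly $|E|$.

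Combining the two displayed facts gives $C_E(G) = |E| + \sum_{i=1}^{n}(t_i - s_i)$ almost surely, and taking expectations (using linearity and the a.s.\ finiteness of $C_E(G)$ noted in the text) yields the claimed formula. There is no real obstacle here; the only subtlety is making sure the bookkeeping of the $\inf$ convention when $s_i = t_i = C_E(G)$ is compatible with both sums, which it is since those terms contribute zero to both sides.
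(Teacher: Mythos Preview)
Your proposal is correct and follows essentially the same approach as the paper: decompose $[0,C_E(G)]$ into greedy and simple parts, observe that greedy steps are in bijection with first crossings of edges so their total count is $|E|$, and then apply linearity of expectation. The paper states the key observation in one line (``the total time the walker spends in the greedy parts equals to the number of edges $|E|$'') without spelling out the two directions you give, but the argument is the same.
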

    Thus, in order to bound $\E[C_E(G)]$, it is enough to bound the expected total size of
    all simple parts, i.e., $\E[\sum_{i=1}^k (t_i - s_i)]$.
    In order to apply Proposition~\ref{prop:formula}, the following notation will be convenient.
    For $i=1,\dots,n$ let
    \[
        B_i = \{v \in V : J_{s_i}(v) = \emptyset\},
    \]
    be the set of vertices, all of whose adjacent edges are covered by time $s_i$
    ($B$ stands for "bad"; if the walker is in some vertex in $B$, then the next
    step will be along an edge that has already been crossed, thus increasing the edge cover time).
    By the definition of $s_i$ and $t_i$, we note that
    $B_i = \{v \in V : J_{t}(v) = \emptyset\}$ for every $t \in [s_i, t_i]$.
    Note also that $B_i \seq B_j$ for all $i < j$, and the vertex
    $v_j = X_{s_j}$ in which the walker got stick at time $s_j$
    does not belong to  $B_i$ for $i < j$, as at any time $t < s_j$
    the vertex $v_j$ still had an adjacent edge which has not been covered yet.
    Thus the containment $B_i \subsetneqq B_j$ is strict for all $i < j \leq k$
    i.e., the sets $B_i$ form a strictly increasing chain until it stabilizes at $B_k = V$:
    \begin{equation}\label{eq:B_i's}
        B_1 \subsetneqq B_2 \subsetneqq \dots \subsetneqq B_k = B_{k+1} = \ldots = B_n = V.
    \end{equation}
   In particular,
   \begin{equation}\label{eq:B_i and k}
        |B_i|<n     \mbox{ if and only if }i<k.
    \end{equation}

    Conditioned on $B_i$ and $X_{s_i}$, the length of the time segment $[s_i, t_i]$
    is distributed as the escape time of a simple random walk from $B_i$, when started at $X_{s_i}$.
    That is, conditioned on $B_i$ and $X_{s_i}$, the random variable $(t_i - s_i)$ has the same distribution as $T(X_{s_i},B_i)$, where
    \[
        T(v,B) = \min\{t: Y_t \notin B | Y_0 = v\},
    \]
    and $Y_0,Y_1,\dots$ is a simple random walk on $G$ started at $Y_0 = v$.
    By applying known bounds of the expected escape time of SRW
    we shall use Proposition~\ref{prop:formula} to upper bound the expected edge cover time of GRW.

%%%%%%%%%%%%%%%%%%%%%%%%%%%%%%%%%%%%%%%%%%%%%%%%%%%%%%%%%%%%%%%%%%%%%%%%%%%%%%%%%%%%%%%%%%
\subsection{The Complete Graph}\label{subsec:cluque}
%%%%%%%%%%%%%%%%%%%%%%%%%%%%%%%%%%%%%%%%%%%%%%%%%%%%%%%%%%%%%%%%%%%%%%%%%%%%%%%%%%%%%%%%%%

    We prove in this section that for the complete graph with $n$ vertices
    the expected edge cover time is $(1 + o(1)){n \choose 2}$.
    Specifically, we prove the following result.
    \begin{theorem}\label{thm:clique}
        For any rule $\RR$ the expected edge cover time of GRW on $K_n$ is bounded by
        \[
            \E[C_E(K_n)] \leq |E| + (1 + o(1))n \log n.
        \]
    \end{theorem}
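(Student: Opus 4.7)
The plan is to invoke the key formula from Proposition~\ref{prop:formula} and reduce the theorem to bounding $\sum_{i=1}^n \E[t_i - s_i]$. Since $t_i = s_i$ whenever $i \geq k$, only indices $i < k$ contribute, and for each such $i$ we have $1 \leq |B_i| \leq n-1$ by~(\ref{eq:B_i and k}).

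The first substantive step is to evaluate $\E[t_i - s_i \mid B_i, X_{s_i}]$. By the discussion preceding the theorem, this conditional expectation equals the expected escape time of SRW on $K_n$ from $B_i$ started at $X_{s_i}$. In $K_n$, from any vertex $v \in B_i$ the walker moves to a uniformly chosen vertex other than $v$, which lies outside $B_i$ with probability $(n - |B_i|)/(n-1)$, independently of the past; hence the escape time is geometric and
\[
    \E[t_i - s_i \mid B_i] \;=\; \frac{n-1}{n - |B_i|}.
\]

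The crux of the argument is then a \emph{pathwise}, deterministic bound. Because~(\ref{eq:B_i's}) is strictly increasing up to index $k$, the sizes $|B_1| < |B_2| < \cdots < |B_{k-1}|$ are distinct positive integers in $\{1,2,\dots,n-1\}$. Summing over all \emph{possible} sizes rather than the realized ones,
\[
    \sum_{i=1}^{k-1} \frac{n-1}{n - |B_i|} \;\leq\; \sum_{j=1}^{n-1} \frac{n-1}{n - j} \;=\; (n-1)\sum_{\ell=1}^{n-1} \frac{1}{\ell} \;=\; (1+o(1))\, n \log n.
\]
Taking expectations and substituting into~(\ref{eq:formula}) yields the claimed bound.

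I do not anticipate a serious obstacle here: the vertex-transitivity of $K_n$ collapses the conditional escape time into a function of $|B_i|$ alone, and the strictness of the chain~(\ref{eq:B_i's}) lets me dominate the sum over realized bad-set sizes by the harmonic sum over all of $\{1,\dots,n-1\}$. This bypasses any need to control the joint law of the $B_i$'s, the starting vertices $X_{s_i}$, or the rule $\RR$, consistent with the remark that the bound should hold for every (possibly adversarial) rule.
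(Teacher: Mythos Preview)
Your proposal is correct and follows essentially the same route as the paper: apply Proposition~\ref{prop:formula}, observe that in $K_n$ the escape time from $B_i$ is geometric with mean $(n-1)/(n-|B_i|)$ depending only on $|B_i|$, and then use the strict chain~\eqref{eq:B_i's} to dominate the pathwise sum $\sum_{i<k}(n-1)/(n-|B_i|)$ by the full harmonic-type sum $\sum_{j=1}^{n-1}(n-1)/(n-j)=(1+o(1))n\log n$. There is no substantive difference from the paper's argument.
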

    This is an improvement over the $\Theta(n^2 \log n)$ time of the SRW,
    which follows by using the coupon collector argument.

    \begin{proof}
    Consider the complete $n$-vertex graph $G = K_n$.
    The proof relies on the following simple observation.
    For any set of vertices $B \seq V$, the escape time of SRW from $B$
    depends only on the size of $B$, and has geometric distribution.
    Specifically, for each $i =1,\dots,n$, the quantity $t_i-s_i$
    \emph{conditioned on} $B_i$ is distributed geometrically:

   \begin{equation}\label{eq:geom}
        t_i - s_i \sim
        \begin{cases}
            G(\frac{n - |B_i|}{n - 1})  &   \mbox{ if } |B_i|<n \\
                    0                   &   \mbox{ otherwise}.
    \end{cases}
   \end{equation}
   Denote by $T_i$ the expected escape time from the subset $B_i$. Then,
    \begin{equation}\label{eq:expectation of geom}
        T_i = \E (t_i-s_i | B_i) =
        \begin{cases}
        \frac{n-1}{n-|B_i|}     &   \mbox{ if $|B_i|<n$} \\
                0               &   \mbox{ otherwise}.
        \end{cases}
    \end{equation}
    By averaging over $B_i$'s, the quantity $\sum_{i=1}^n \E [t_i-s_i]$ is equal to
    \[
        \sum_{i=1}^n \E [t_i-s_i]
            =   \sum_{i=1}^n \E [\E (t_i-s_i | B_i)]
            =   \sum_{i=1}^n \E [T_i]
            =   \E \left[ \sum_{i=1}^{k-1}\frac{n-1}{n-|B_i|} \right],
    \]
    where the last equality follows from
    linearity of expectation, together with~\eqref{eq:expectation of geom}.
    In order to bound the sum in the expectation, let $b_i = |B_i|$, and note
    that we have an increasing sequence of natural numbers $b_1<b_2<\dots<b_k$
    so that $b_1 \geq 1$ and $b_k=n$ for some $k \leq n$.
    For any such sequence it holds that
    \begin{equation}\label{ineq:bounding bi}
        \sum_{i=i}^{k-1}\frac{n-1}{n-b_i}\leq  \sum_{i=i}^{n-1} \frac{n-1}{n-i}.
    \end{equation}
    To see this note that all summands are positive, and each one on the left
    hand side of the inequality, appears also on the right hand side.
    Therefore, we can upper bound the quantity $\sum_{i=1}^n \E [t_i-s_i]$ by
    \[
        \sum_{i=1}^n \E [t_i-s_i]
          \leq  \sum_{i=1}^{n-1} \frac{n-1}{n-i}
            =   (1+ o(1))n \log n
    \]
    Applying Proposition~\ref{prop:formula} gives the desired result.
    \end{proof}

    \begin{remark}
    We conjecture that if the rule in the greedy part is $\RRand$
    (in which an edge is chosen uniformly at random among
    the adjacent unvisited edges of the current vertex),
    then for odd values of $n$, i.e., when the degree is even,
    the overhead for clique is $O(n)$, i.e., $\E[C_E(K_n)] \leq |E| + O(n)$.
    For a related discussion see Section~\ref{sec:remarks and open problems}.
    \end{remark}

%%%%%%%%%%%%%%%%%%%%%%%%%%%%%%%%%%%%%%%%%%%%%%%%%%%%%%%%%%%%%%%%%%%%%%%%%%%%%%%%%%%%%%%%%%
\subsection{Expander graphs}\label{subsec:expanders}
%%%%%%%%%%%%%%%%%%%%%%%%%%%%%%%%%%%%%%%%%%%%%%%%%%%%%%%%%%%%%%%%%%%%%%%%%%%%%%%%%%%%%%%%%%

    We apply the same method as in the previous section on expander graphs.
    Let $G = (V,E)$ be a $d$-regular graph on $n$ vertices and
    let $A = A(G) \in \{0,1\}^{V \times V}$ be its normalized adjacency matrix,
    namely
    \[
        A(u,v) =
        \begin{cases}
            1/d       &  (u,v) \in E \\
            0       &  (u,v) \notin E.
        \end{cases}
    \]
    It is a standard fact that $A$ has real eigenvalues, all lying in the interval $[-1,1]$.
    Denote the eigenvalues by $1 = \lambda_1 \geq \lambda_2 \geq \dots \geq \lambda_{n} \geq -1$,
    and let $\lambda(G)$ be the spectral radius of $G$, defined as
    \[
        \lambda(G) = \max_{i=2, \dots, n} |\lambda_i|
    \]
    We say a $d$-regular graph $G$ is a $(n, d, \lambda)$-expander,
    if $\lambda(G) < \lambda < 1$
    (for more details see the excellent survey \cite{HLW06}).

    We are able to show that for $d = \Omega(\log n)$,
    the expected edge cover time of the GRW is linear in the number of edges.
    This is faster than a simple random walk, which covers the edges
    in $\Omega ( |E| \log |E| )$ steps, as mentioned in the introduction.
    Specifically, we prove the following theorem.

    \begin{theorem}\label{thm:spectral_bound}
        Let $G$ be a $(n, d, \lambda)$-expander graph.
        Then, for any rule $\RR$ the expected edge cover time is
        \[
            \E[C_E(G)] \leq |E| + O \left( \frac{n \log n}{1-\lambda} \right).
        \]
        In particular, for an expander with $d = \Omega(\log n)$ the expected edge cover time of the GRW is linear in the number of edges.
    \end{theorem}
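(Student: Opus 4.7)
My plan is to follow the same template as the proof of Theorem~\ref{thm:clique}, replacing the elementary geometric escape-time computation (valid for $K_n$) with a spectral estimate tailored to expanders.

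First, apply Proposition~\ref{prop:formula} to reduce to bounding $\sum_{i=1}^n \E[t_i - s_i]$. As in the clique case, conditional on $B_i$ and $X_{s_i}$, the quantity $t_i - s_i$ has the same distribution as the escape time $T(X_{s_i}, B_i)$ of an ordinary SRW from $B_i$ started at $X_{s_i}$. Thus it suffices to establish an upper bound of the form $\E[T(v, B)] \le Cn/((1-\lambda)(n-|B|))$ uniformly over proper subsets $B \subsetneq V$ and starting vertices $v \in V$.

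The key input is a spectral bound on the submatrix $P_B$ of the SRW transition matrix $P$ restricted to $B \times B$. A standard decomposition of any $x \in \R^B$ into its $\One$-component and its orthogonal part (combined with the fact that $P$ acts on $\One^\perp$ with operator norm at most $\lambda$, because $G$ is $d$-regular) gives
\[
\mu_{\max}(P_B) \;\le\; \lambda + (1-\lambda)\,\frac{|B|}{n} \;=\; 1 - (1-\lambda)\,\frac{n-|B|}{n}.
\]
Since $\Pr[T(v,B)>t] = (P_B^t \One_B)_v$, combining this eigenvalue bound with the eigendecomposition of $P_B$ and a mixing argument inside $B$ (reaching its quasi-stationary distribution) yields $\E[T(v,B)] = O(n/((1-\lambda)(n-|B|)))$. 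Finally, by~\eqref{eq:B_i's} the sets $B_i$ form a strictly increasing chain, so $|B_i| \ge i$ for all $i < k$, and hence
\[
\E[C_E(G)] \;\le\; |E| + \sum_{i=1}^{n-1}\frac{Cn}{(1-\lambda)(n-i)} \;=\; |E| + O\!\left(\frac{n\log n}{1-\lambda}\right),
\]
giving the claimed bound.

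The main obstacle is the escape-time step: a naive Cauchy--Schwarz bound $(P_B^t\One_B)_v \le \sqrt{|B|}\,\mu_{\max}(P_B)^t$ introduces a spurious $\log n$ factor in $\E[T(v,B)]$ and would produce a final estimate of order $n\log^2 n/(1-\lambda)$, which is too weak to conclude linearity of the edge cover time already at $d = \Omega(\log n)$. Shaving this extra $\log n$---so that the escape from every $B_i$ contributes at most $O(n/((1-\lambda)(n-|B_i|)))$ regardless of the starting vertex---requires a finer mixing/quasi-stationary argument: one separates a short initial transient from the subsequent geometric decay phase and, crucially, amortizes the transient cost over the $n$ simple parts rather than paying it in each.
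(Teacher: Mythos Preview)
Your overall strategy---reduce via Proposition~\ref{prop:formula}, bound each $t_i-s_i$ by an SRW escape time from $B_i$, then sum using the strict chain~\eqref{eq:B_i's}---is exactly the paper's approach. The difference lies in the escape-time estimate, and here you are making your life harder than necessary.

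You aim for the sharp per-escape bound $\E[T(v,B)] = O\!\bigl(n/((1-\lambda)(n-|B|))\bigr)$ and then, correctly, observe that a naive $\ell_2$ argument based on $\mu_{\max}(P_B)\le 1-(1-\lambda)(n-|B|)/n$ only gives this with an extra multiplicative $\log n$. But you do not need the sharp bound. The paper simply invokes the Broder--Karlin estimate (Lemma~\ref{lemma:BK}),
\[
\E[T(v,B)] \;\le\; \frac{C}{1-\lambda}\Bigl(\log n + \frac{n}{n-|B|}\Bigr),
\]
in which the $\log n$ is \emph{additive}, not multiplicative. Heuristically, the $\log n/(1-\lambda)$ is a one-time mixing cost after which the walk is near-stationary and escapes at rate $(n-|B|)/n$; the two contributions add rather than multiply. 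With this additive form the summation is immediate:
\[
\sum_{i=1}^{k-1}\frac{C}{1-\lambda}\Bigl(\log n + \frac{n}{n-|B_i|}\Bigr)
\;\le\; \frac{C}{1-\lambda}\Bigl(n\log n + \sum_{i=1}^{n-1}\frac{n}{n-i}\Bigr)
\;=\; O\!\left(\frac{n\log n}{1-\lambda}\right),
\]
using $k\le n$ and the argument of~\eqref{ineq:bounding bi}. This is precisely the ``amortization of the transient cost'' you allude to in your last paragraph, already packaged as a citable lemma; there is no need for a quasi-stationary refinement or for eliminating the $\log n$ from the per-escape bound.
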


    \begin{proof} The key observation here is that, as in the case of the complete graph,
    $\E(t_i - s_i| B_i)$ can be bounded in terms of the \emph{size} of $B_i$,
    independently of its structure. We use the following lemma of Broder and Karlin.

    \begin{lemma}[{\cite[Lemma 3]{BroderKarlin89}}]\label{lemma:BK}
        Let $G$ be an $(n,d,\lambda)$-expander and let $S \subsetneqq V$ be a non-empty set of vertices.
        Consider a simple random walk $Y_0, Y_1 \dots$ on $G$, starting at some $v \in S$ (i.e., $Y_0 = v$).
        Let $T(v,S)$ be the escape time of the walk from $S$ when started from $v$.
        Then
        \[
            \E[T(v,S)] \leq \frac{C}{1-\lambda} \left( \log n + \frac{n}{n-|S|} \right)
        \]
    \end{lemma}
    Denoting by $T_i$ the expected escape time from the subset $B_i$,
    by Lemma~\ref{lemma:BK}, for all $i = 1, \dots, n$ we have
    \begin{equation}\label{eq:expanders bound}
        T_i := \E (t_i-s_i | B_i) \leq
        \begin{cases}
            \frac{C}{1-\lambda} \left(\log n  + \frac{n}{n-|B_i|} \right)   &   \mbox{ if $|B_i|<n$} \\
            0       &       \mbox{ otherwise }
        \end{cases}
    \end{equation}
    for some absolute constant $C \in \R$.
    In order to upper bound $\sum_{i=1}^n \E [t_i-s_i]$
    we apply an analysis similar to that in the proof of Theorem~\ref{thm:clique}.
    Specifically, by averaging over the $B_i$'s, the quantity $\sum_{i=1}^n \E [t_i-s_i]$ equals to
    \[
        \sum_{i=1}^n \E [t_i-s_i]
            =   \sum_{i=1}^n \E [\E (t_i-s_i | B_i)]
            =   \sum_{i=1}^n \E [T_i]
            =   \E \left[ \sum_{i=1}^n T_i \right],
    \]
    where the last equality follows from linearity of expectation.
    Using~\eqref{eq:expanders bound}
    we obtain
    \begin{eqnarray*}
        \sum_{i=1}^n \E [t_i-s_i]
            & \leq   &   \E \left[ \sum_{i=1}^{k-1} \frac{C}{1-\lambda} \left(\log n  + \frac{n}{n-|B_i|} \right) \right] \\
            & \leq  &   \frac{C}{1-\lambda} \cdot n \log(n) + \frac{C}{1 - \lambda} \E \left[ \sum_{i=1}^{k-1}\frac{n}{n-|B_i|} \right] \\
            & \leq  &   O \left( \frac{n \log n}{1 - \lambda} \right),
    \end{eqnarray*}
    where the bound $\sum_{i=1}^{k-1}\frac{n}{n-|B_i|} \leq O(n \log(n))$
    in the last inequality follows using the same proof as~\eqref{ineq:bounding bi}.
    Using Proposition~\ref{prop:formula}, we have
    \[
        \E [C_E(G)] \leq |E| + \sum_{i=1}^n \E [t_i-s_i]
                     = |E| + O \left(\frac{n \log n}{1 - \lambda} \right),
    \]
    which completes to proof of the theorem.
    \end{proof}

    Next, we strengthen Theorem~\ref{thm:spectral_bound} by showing that for
    constant degree expanders with logarithmic girth whose vertices have even
    degrees, the expected edge cover time is linear in the number of vertices.
    Recall that girth of a graph $G$, denoted by $\girth(G)$ is the minimal
    length of a cycle in $G$.
    This result is claimed in~\cite{BCF} without proof.

    \begin{theorem}\label{thm:expanders log(n)-girth even deg}
        Let $G$ be a $(n, d, \lambda)$-expander graph
        such that $d \in \N$ is even, and $\girth(G) = g$.
        Then, for any rule $\RR$ the expected edge cover time is
        \[
            \E[C_E(G)] \leq |E| + O \left( |E| \cdot \frac{\log(n)}{(1-\lambda) g} \right).
        \]
        In particular, if $G=(V,E)$ is an expander of constant even degree
        with $\girth(G) = \Omega(\log(n))$,
        then the expected edge cover time of the GRW is linear in the number of vertices.
    \end{theorem}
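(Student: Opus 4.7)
The plan is to build on the proof of Theorem~\ref{thm:spectral_bound}, inserting two structural ingredients that exploit the even-degree and girth hypotheses to tighten both the count of simple segments and the escape-time sum.

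First I would establish a parity lemma: in an even-degree graph, each greedy segment is a closed trail, i.e., $X_{t_{i-1}} = X_{s_i}$ for all $i$. Writing $u_i = X_{t_{i-1}}$, $v_i = X_{s_i}$, consider $O_t = \{v \in V : \deg_{H_t}(v) \text{ is odd}\}$. We have $O_0 = \emptyset$; simple steps do not change $O_t$; and greedy steps toggle $O_t$ at both endpoints of the new edge. The edges added during the $i$-th greedy segment form a trail whose odd-degree vertex set is $\{u_i, v_i\}$ when $u_i \neq v_i$ and $\emptyset$ when $u_i = v_i$; correspondingly, $O_{s_i} = O_{t_{i-1}} \triangle \{u_i, v_i\}$ in the first case and $O_{s_i} = O_{t_{i-1}}$ in the second. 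Since $d$ is even, the stuck condition $\deg_{H_{s_i}}(v_i) = d$ gives $v_i \notin O_{s_i}$. An induction now forces $u_i = v_i$ and $O_{s_i} = \emptyset$ for every $i$: under $O_{t_{i-1}} = \emptyset$, if $u_i \neq v_i$ then $v_i \in O_{s_i}$, contradicting the stuck condition.

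The second ingredient is a girth bound. A closed trail of positive length contains a cycle, hence has length $\geq g$, and since the total greedy time equals $|E|$ this gives the deterministic inequality $k \leq |E|/g$. A related per-$B_i$ estimate comes from observing that at time $s_i$ every edge incident to $B_i$ is already covered, so every remaining uncovered edge lies in $G[V \setminus B_i]$ and there are at most $d(n-|B_i|)/2$ such edges. These will be covered by the $k-i$ subsequent greedy segments, each of length at least $g$, which yields the second deterministic inequality
\[
(k-i)\, g \;\leq\; \frac{d}{2}(n - |B_i|), \qquad \text{i.e.,} \qquad n - |B_i| \;\geq\; \frac{2g(k-i)}{d}.
\]

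Applying Proposition~\ref{prop:formula} together with Lemma~\ref{lemma:BK} in the manner of Theorem~\ref{thm:spectral_bound} bounds the overhead by $\frac{C}{1-\lambda}\sum_{i=1}^{k-1}(\log n + n/(n - |B_i|))$. The first piece is controlled by $k \leq |E|/g$, giving $(k-1)\log n \leq |E|\log n / g$. The second piece is controlled by the inequality above:
\[
\sum_{i=1}^{k-1} \frac{n}{n - |B_i|} \;\leq\; \frac{nd}{2g}\sum_{i=1}^{k-1} \frac{1}{k-i} \;=\; \frac{|E|}{g}\, H_{k-1} \;=\; O\!\left(\frac{|E|\log n}{g}\right),
\]
using $k \leq n$, where $H_m$ denotes the $m$-th harmonic number. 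Combining these estimates yields the claimed bound. I expect the main obstacle to be the parity/closed-trail lemma, which is where the even-degree hypothesis enters in an essential and somewhat subtle way; once that fact and the deterministic girth inequality above are in hand, the remainder is a direct bookkeeping extension of the argument for Theorem~\ref{thm:spectral_bound}.
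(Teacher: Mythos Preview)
Your proposal is correct and follows essentially the same route as the paper's proof: both rely on the closed-trail/parity observation (the paper's Observation~\ref{obs:greedy part cycles}), deduce $k \leq |E|/g$, bound the number of remaining uncovered edges at time $s_i$ in terms of $n-|B_i|$ to get $k-i \lesssim d(n-|B_i|)/g$, and finish with a harmonic sum plugged into the spectral escape-time estimate. Your parity argument via the odd-degree set $O_t$ is more explicit than the paper's one-line Euler-path reasoning, and your edge count $d(n-|B_i|)/2$ is a factor of two sharper than the paper's $d(n-|B_i|)$, but neither difference affects the final $O(\cdot)$ bound.
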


    The proof relies on following simple observation.
    Suppose that the $i$th greedy part starts at some vertex $v = X_{s_i}$.
    Then, using the fact that all degrees of $G$ are even,
    we conclude that this greedy part will end at the same vertex $v$.
    Indeed, by an Euler-path type argument if a vertex has even degree
    and the walker entered this vertex along a new edge that has not
    been visited so far, then by parity there must be another unvisited edge for the
    walker to leave the vertex. In particular, the range covered by each greedy
    part forms a (not necessarily simple) cycle.
    We summarize this observation below:
    \begin{observation}\label{obs:greedy part cycles}
        If the all degrees of a graph $G= (V,E)$ are even, then in each greedy time
        part $[t_i,s_{i+1}]$ it holds that $X_{t_i} = X_{s_{i+1}}$, i.e., every greedy part
        ends at the same vertex it has started from.

        Therefore, since at the greedy time parts the walker crosses no edge twice,
        in each greedy part $[t_i,s_{i+1}]$ the walker traverses
        along some (not necessarily simple) cycle, and thus the number of steps
        in each greedy time part is at least $\girth(G)$.
    \end{observation}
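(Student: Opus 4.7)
The plan is a parity argument tracking $|J_t(v)| \bmod 2$ across the walk, using that every degree is even. I will prove by induction on $i$ the invariant that $|J_{t_i}(v)|$ is even for every $v \in V$; since simple parts $[s_j, t_j]$ do not modify $J$ (the walker only re-crosses already-visited edges during them), this is equivalent to the same invariant at $s_i$. The base case $i=0$ holds because $|J_0(v)| = \deg_G(v)$ is even by assumption.

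For the inductive step, fix the greedy part $[t_i, s_{i+1}]$ and recall that all edges traversed in this part are pairwise distinct. For each vertex $v$, let $m_v$ denote the number of indices $j \in [t_i, s_{i+1}]$ with $X_j = v$. A direct bookkeeping --- each internal occurrence of $v$ contributes two distinct new edges incident to $v$, and each endpoint occurrence contributes one --- yields that the number of fresh edges incident to $v$ traversed in the greedy part equals $2 m_v - \mathbf{1}[v = X_{t_i}] - \mathbf{1}[v = X_{s_{i+1}}]$. This quantity is odd iff $v$ is exactly one of the two endpoints of the greedy part. Now assume for contradiction that $X_{t_i} \neq X_{s_{i+1}}$, and set $v := X_{s_{i+1}}$; the induction hypothesis gives $|J_{t_i}(v)|$ even, while the parity at $v$ flips during the greedy part, forcing $|J_{s_{i+1}}(v)|$ to be odd. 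But by definition of getting stuck, $|J_{s_{i+1}}(v)| = 0$, which is even --- a contradiction. Therefore $X_{t_i} = X_{s_{i+1}}$, and in this case no vertex contributes a parity flip, so the invariant propagates to $s_{i+1}$ (and hence to $t_{i+1}$), closing the induction.

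For the length statement, the greedy part is non-empty because at time $t_i$ the walker has an unvisited incident edge by definition of $t_i$. So the walk over $[t_i, s_{i+1}]$ is a non-trivial closed trail (closed walk with distinct edges) based at $X_{t_i} = X_{s_{i+1}}$; iteratively cutting a subloop at a repeated intermediate vertex extracts a simple cycle from this trail, and therefore the number of edges traversed --- which is exactly the number of steps in the greedy part --- is at least $\girth(G)$. The only subtle point in the entire argument is the endpoint bookkeeping in the parity count, but the indicator-based formula handles the three regimes (both endpoints, exactly one, neither) uniformly and makes the ``Euler-path type'' intuition quoted in the text rigorous.
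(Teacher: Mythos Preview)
Your proof is correct and follows the same ``Euler-path type'' parity idea the paper sketches; you simply make it rigorous by carrying the invariant that $|J_{t_i}(v)|$ is even for every $v$ through an explicit induction on $i$, which the paper leaves implicit. The bookkeeping formula $2m_v - \mathbf{1}[v = X_{t_i}] - \mathbf{1}[v = X_{s_{i+1}}]$ is exactly the handshake count for a trail and cleanly justifies the parity flip, and your extraction of a simple cycle from the nontrivial closed trail is the standard way to get the $\girth(G)$ lower bound.
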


    We now turn to the proof of Theorem~\ref{thm:expanders log(n)-girth even deg}.
    \begin{proof}
        Using the proof of Theorem~\ref{thm:spectral_bound} the expected edge cover time
        can be upper bounded by
        \begin{equation}\label{eq:spectral bound}
            \E[C_E(G)] = |E| + O\left( \frac{1}{1-\lambda} \right) \cdot
                            \E \left[ \sum_{i=1}^{k-1} \left(\log n  + \frac{n}{n-|B_i|} \right) \right].
        \end{equation}
        By Observation~\ref{obs:greedy part cycles} it follows that
        the random number $k$ of greedy parts is upper bounded by $\frac{|E|}{g}$.
        Therefore, the term $\log(n) \E[k]$ is bounded from above by
        $\frac{\log(n) |E|}{g}$.

        In order to bound the terms $\frac{n}{n-|B_i|}$ note that
        for all $i \leq k$, it holds that $k \leq i + \frac{d \cdot (n - |B_i|)}{g}$.
        Indeed, if in time $s_i$ the number of vertices all of whose adjacent edges
        have already been covered is $|B_i|$, then the number of edges that have
        not been traversed so far is at most $d \cdot (n - |B_i|)$, and hence,
        by the assumption on the girth of $G$, the number of remaining greedy parts is at most
        $\frac{d \cdot (n - |B_i|)}{g}$. Therefore, for all $i \leq k$ we have
        \[
            \frac{n}{n - |B_i|} \leq \frac{dn}{(k-i) \cdot g} = \frac{|E|}{(k-i) \cdot g}.
        \]
        By \eqref{eq:spectral bound} we have
        \begin{eqnarray*}
            \E[C_E(G)]
                    &   =   &   |E| +  O\left( \frac{1}{1-\lambda} \right) \cdot \E[k \log(n)]
                            + O \left( \frac{1}{1-\lambda} \right) \cdot \left( \sum_{i=1}^{k-1} \frac{n}{n-|B_i|} \right) \\
                    &   =   &   |E| +  O \left( |E| \cdot \frac{\log(n)}{(1-\lambda) g} \right)
                                    + O \left( \frac{1}{1-\lambda} \right) \cdot \left( \sum_{i=1}^{k-1} \frac{|E|}{(k-i) \cdot g} \right) \\
                    & \leq  &   |E| +  O \left( |E| \cdot \frac{\log(n)}{(1-\lambda) g} \right),
        \end{eqnarray*}
        where the last inequality uses the assumption that $k \leq n$ and the facts that $\sum_{i=1}^k \frac{1}{k-i} \leq \log(k)$.
        Theorem~\ref{thm:expanders log(n)-girth even deg} follows.
    \end{proof}

    We show below that the assumption that graph has logarithmic girth
    in Theorem~\ref{thm:expanders log(n)-girth even deg} is necessary.
    Specifically, we present a 6-regular expander graph  graph $G$,
    and a rule $\RR$, such that GRW with the rule $\RR$
    coves all the edges of $G$ in expected time $\Omega(n \log(n))$.
    In fact, the graph $G$ satisfies an additional property,
    that every vertex of $G$ is contained in some induced cycle of logarithmic length.
    This should be compared with the result of Berenbrink et al.~\cite{BCF} who have shown
    that if $G$ is an even degree expander such that every vertex of $G$ is contained
    in some induced cycle of logarithmic length, then the expected \emph{vertex}
    cover time by GRW is linear for any rule $\RR$.
    This shows a gap between the edge cover time and the vertex cover time of GRW.

    \begin{theorem}
        For every $n = 0 \pmod{3}$ there exists a 6-regular expander graph $G = (V,E)$ with $|V|=n$
        vertices such that every vertex of $G$ is contained in an induced cycle of logarithmic length,
        and there exists a rule $\RR$ such that the expected edge cover time of $G$
        by GRW with the rule $\RR$ is $\Omega(n \log(n))$.
    \end{theorem}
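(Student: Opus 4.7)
The plan is to construct $G$ by taking a $4$-regular Ramanujan graph $H$ of girth $\Omega(\log n)$ on $n$ vertices and augmenting it with a triangle packing. Concretely, I would partition $V(H)$ into $n/3$ triples whose three members are mutually ``$H$-far'' (say, pairwise $H$-distance exceeding $\girth(H)$), and let $T$ be the $6$-edge-disjoint union of triangles on these triples; set $G := H \cup T$. The resulting graph is $6$-regular with $|E(G)| = 3n$. Expansion follows from the convex-combination identity $A_G = \tfrac{2}{3}A_H + \tfrac{1}{3}A_T$ and the spectrally benign behavior of $A_T$ on each triangle (eigenvalues $1,-\tfrac12,-\tfrac12$), giving $\lambda(G) \le \tfrac{2}{3}\lambda(H) + \tfrac{1}{3} < 1$. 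For the induced cycle condition, take any shortest $H$-cycle through $v$ of length $\Omega(\log n)$; because each triple of $T$ consists of vertices that are pairwise $H$-far, no two cycle vertices share a triangle, so the cycle has no $T$-chord and remains induced in $G$. The existence of a good partition would be argued by a greedy/first-moment computation, exploiting that the ball growth $4\cdot 3^{\girth(H)}$ in $H$ is only a bounded power of $n$.

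Next, I would exhibit an adversarial rule $\RR$ that forces $\Omega(n)$ short greedy parts. The rule is: whenever the walker sits at a vertex $v$ with an unvisited $T$-edge, $\RR$ prefers such an edge (breaking ties arbitrarily); only when all $T$-edges at $v$ are covered does $\RR$ fall back on an arbitrary unvisited $H$-edge. Whenever the walker enters a triangle $\{u,v,w\}$ via an $H$-edge into $u$ at a moment when all three $T$-edges of $\{u,v,w\}$ are still fresh, the rule deterministically executes $u \to v \to w \to u$, and this length-$3$ closed walk is precisely the greedy part by Observation~\ref{obs:greedy part cycles}. A simple charging argument --- each triangle can give rise to at most $O(1)$ greedy parts under $\RR$, because after closing a triangle all of its $T$-edges are used --- together with a counting argument on $|E(G)| = 3n$ then shows that the total number of greedy parts $k$ is $\Omega(n)$.

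For the lower bound on $\E[C_E(G)]$ I would apply the key formula~(\ref{eq:formula}) and bound each simple part from below using a matching lower bound to Lemma~\ref{lemma:BK}: in an expander, the SRW escape time from a set $B$ started at a boundary vertex is at least $\Omega(n/(n-|B|))$ on average, since the walk needs to hit one of the $n-|B|$ external vertices. Because the sets $B_i$ strictly increase through the $k = \Omega(n)$ values realized by the GRW, and because a constant fraction of these cardinalities populate each dyadic range in $[1,n-1]$, the harmonic-type sum gives
\[
\sum_{i=1}^{k} \E[t_i - s_i] \;\ge\; \Omega\!\left(\sum_{j=1}^{n-1} \frac{1}{j}\right)\cdot n \;=\; \Omega(n\log n),
\]
and combining with $|E(G)| = 3n$ yields $\E[C_E(G)] = \Omega(n\log n)$.

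The main obstacle is the first step: producing a single $6$-regular graph that is simultaneously an expander, admits the triangle packing (needed so that the short-greedy-part rule is activated $\Omega(n)$ times), and retains an induced logarithmic cycle through every vertex. The triangle packing inevitably introduces short non-induced cycles that could chord every log-length cycle through some vertex; thus the triples must be chosen so that each triangle's three members sit far apart in $H$. This is where the probabilistic/greedy choice of the partition is essential, and where the quantitative balance between girth and ball growth in $H$ has to be verified. A secondary technical point is ensuring that earlier greedy parts do not prematurely cover $T$-edges ``out of order'' and thereby prevent enough triangles from being available as short greedy parts later; a slack of a constant fraction of triangles is more than enough to push through the lower bound.
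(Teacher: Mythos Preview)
Your construction idea is in the right spirit, but the proof has two genuine gaps.

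\textbf{The rule is backward.} You let $\RR$ prefer $T$-edges, and claim that after the walker enters a fresh triangle $\{u,v,w\}$ via an $H$-edge and executes $u\to v\to w\to u$, ``this length-$3$ closed walk is precisely the greedy part.'' It is not. When the walker returns to $u$, only the two $T$-edges at $u$ and one $H$-edge (the arrival edge) are covered; three $H$-edges at $u$ are still fresh, so by definition the greedy part continues along one of them. Under your rule the walk alternates ``triangle, $H$-edge, triangle, $H$-edge, \dots'' inside a single long greedy part, and there is no reason for $k=\Omega(n)$. The correct design is the opposite: first exhaust all $H$-edges (an Euler tour, possible since $H$ is $4$-regular), and only then use $T$-edges. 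After that first greedy part the uncovered edges are exactly $n/3-1$ vertex-disjoint triangles, and every subsequent greedy part has length~$3$.

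\textbf{The harmonic lower bound does not give $n\log n$.} Even granting $k=\Omega(n)$ and strictly increasing $|B_i|$, the bound $|B_i|\ge i$ only yields
\[
\sum_{i=1}^{k-1}\frac{n}{n-|B_i|}\ \ge\ \sum_{i=1}^{k-1}\frac{n}{n-i}\ =\ n\log\frac{n-1}{n-k}\,,
\]
which is $\Theta(n)$ whenever $k\le cn$ with $c<1$ (and here $k\le n/3+O(1)$). The ``constant fraction in each dyadic range'' claim is unjustified. You would separately need a lower bound of the form $\E[t_i-s_i\mid B_i,X_{s_i}]\ge \Omega(n/(n-|B_i|))$, which you assert but do not prove, and which is sensitive to whether $X_{s_i}$ sits on the boundary of $B_i$.

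The paper sidesteps both issues. It takes $G=H\times K_3$ with $H$ a $4$-regular expander on $n/3$ vertices (so the triangles come for free, no far-apart partition needed), uses a rule that first traverses Euler cycles on the three copies of $H$, and then observes that the concatenated simple parts form a lazy SRW on $H$ that must visit every vertex of $H$ to trigger each remaining triangle. The $\Omega(n\log n)$ bound then comes directly from Feige's lower bound on the vertex cover time of SRW --- no per-step escape-time estimate is needed.
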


    \begin{proof}
        Let $H=(U,F)$ be a 4-regular expander graph on $n/3$ vertices such that
        every vertex of $G$ is contained in an induced cycle of length $\eps\log(n)$
        for some constant $\eps>0$.%
        \footnote{Such graph can be obtained by choosing a random 4-regular graph.
        For reference see~\cite[Chapter II.4]{randomgraphs}.}
        Define a graph $G=(V,E)$ to be the cartesian product of $H$ with the graph $K_3$.
        Namely, the vertices of $G$ are $V = U \times \{1,2,3\}$
        and $((u,i),(u',j)) \in E$ if and only if either (1) $(u,u') \in F$ and $i=j$,
        or (2) $u=u'$ and $i \neq j$.
        By the properties of $H$, the graph $G$ is a 6 regular expander and
        it satisfies the property that every vertex of $G$ is contained
        in some induced cycle of length at least $\eps \log(n)$.

        The vertices of $G$ are naturally partitioned into 3 subsets
        $V = V_1 \cup V_2 \cup V_3$ where $V_i = \{ (U,i) : u \in U\}$ for $i=1,2,3$.
        The rule $\RR$ is defined so that the first greedy part will cover
        all edges of the form $((u,i)(v,i))$ for all $(u,v) \in F$ and $i \in \{1,2,3\}$.
        Assume now that GRW starts from some arbitrary vertex $(u_0,1) \in V_1$.
        The walker walks along some Eulerian cycle on $V_1$,
        thus, covering all edges induced by $V_1$.
        Indeed, this can be done, as the graph induced by $V_1$ is isomorphic to $H$,
        and hence its vertices have even degrees.
        After completing the cycle in $V_1$, and returning to the initial vertex $(u_0,1)$,
        the walker moves to $(u_0,2)$, performs a walk along some Eulerian cycle on $V_2$,
        and returns back to $(u_0,2)$.
        Similarly, the walker, then, moves to $(u_0,3)$,
        covers all edges induced by $V_3$, and returns to $(u_0,3)$.
        Finally, the walker moves back to $(u_0,1)$, and gets stuck for the first time.
        Note that at this point all edges induces by each of $V_i$'s have already been
        covered by GRW, and the remaining edges form disjoint triangles of the form
        $\{(u,1),(u,2),(u,3)\}$ induced by each of the vertices $u \in U \setminus \{ u_0 \}$.
        Hence, each subsequent greedy part will consist of 3 steps, covering one triangle at each part,
        and the order is defined by the first time that SRW will reach some
        vertex of a triangle $\{(u,i) : i = 1,2,3\}$.
        Noting that SRW on $G$ induces a lazy-SRW on $H$ (where a lazy step in $H$ corresponds
        to a step from $(u,i)$ to $(u,j)$ in $G$), it follows that in order to cover all triangles,
        lazy-SRW needs to cover all the vertices of a copy of $H$.
        Since by the theorem of Fiege~\cite{Feige95lowerbound}
        the expected vertex cover time of every graph by SRW is at least $\Omega(n \log(n)$,
        this bound also holds for the edge cover time of $G$.
        This completes the proof of the theorem.
    \end{proof}

%%%%%%%%%%%%%%%%%%%%%%%%%%%%%%%%%%%%%%%%%%%%%%%%%%%%%%%%%%%%%%%%%%%%%%%%%%%%%%%%%%%%%%%%%%
\subsection{Hypercube $\{0,1\}^d$}\label{subsec:hypercube}
%%%%%%%%%%%%%%%%%%%%%%%%%%%%%%%%%%%%%%%%%%%%%%%%%%%%%%%%%%%%%%%%%%%%%%%%%%%%%%%%%%%%%%%%%%

    The hypercube graph $G = (V,E)$ is a graph, whose vertices are
    $V = \{0,1\}^d$ and $(u,v) \in E$ if and only if $d(u,v) = 1$,
    where $d(\cdot,\cdot)$ is the Hamming distance between two strings.
    We show that for even dimension $d$ the edge cover time of the hypercube
    is linear in the number of edges.

    \begin{proposition}\label{prop:hypercube}
        Let $d \in \N$ be even, and let $Q_d = (V,E)$ be the $d$-dimensional hypercube graph.
        Then, for any rule $\RR$ the expected edge cover time of $Q_d$ is bounded by
        \[
            \E[C_E(Q_d)] = O(|E|).
        \]
        \end{proposition}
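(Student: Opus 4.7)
The plan is to reduce the edge-cover claim to a bound on the \emph{vertex} cover time of $Q_d$ by simple random walk. Since $d$ is even, every vertex of $Q_d$ has even degree, so Lemma~\ref{lemma:SRW cover time bound} applies and yields
\[
    \E[C_E(Q_d)] \;\le\; |E| \;+\; \E\bigl[C_V^{\mathrm{SRW}}(Q_d)\bigr].
\]
Hence it suffices to show that the expected vertex cover time of $Q_d$ by simple random walk is $O(|E|) = O(d \cdot 2^d)$.

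For that remaining step I would invoke Matthews' bound, $\E[C_V^{\mathrm{SRW}}(G)] \le (1 + \ln |V|) \cdot \max_{u,v \in V} h(u,v)$, where $h(u,v)$ is the expected SRW hitting time from $u$ to $v$. On $Q_d$ the maximum of $h(u,v)$ is attained at antipodal pairs, and a standard short calculation via the Ehrenfest-urn projection of SRW on $Q_d$ (the Hamming weight of the walker evolves as a birth-death chain on $\{0,1,\dots,d\}$) shows that this maximum is $\Theta(2^d)$. With $n=2^d$ so $\ln n = \Theta(d)$, Matthews' inequality then gives $\E[C_V^{\mathrm{SRW}}(Q_d)] = O(d \cdot 2^d) = O(|E|)$, and combining with the reduction above completes the proof.

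The main point worth emphasizing is why the spectral machinery already developed in the paper is not sharp enough here. The spectral gap of $Q_d$ satisfies $1 - \lambda(Q_d) = 2/d$, so Theorem~\ref{thm:spectral_bound} produces only $O(n \log n / (1-\lambda)) = O(d^2 \cdot 2^d) = O(d \cdot |E|)$, losing a logarithmic factor; and the girth of $Q_d$ is only $4$, so Theorem~\ref{thm:expanders log(n)-girth even deg} provides no improvement either. The saving down to linear in $|E|$ is genuinely driven by the parity assumption on $d$: every greedy part closes up into an Eulerian cycle, which is precisely the structural fact that Lemma~\ref{lemma:SRW cover time bound} exploits in order to replace the $O(n\log n / (1-\lambda))$ spectral overhead by the tighter SRW \emph{vertex} cover time.
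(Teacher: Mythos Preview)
Your proposal is correct and follows essentially the same route as the paper: reduce via Lemma~\ref{lemma:SRW cover time bound} (using that $d$ even makes $Q_d$ an even-degree graph) to the SRW vertex cover time of $Q_d$, and then use that this cover time is $O(d\cdot 2^d)=O(|E|)$. The paper simply quotes the $O(d\cdot 2^d)$ cover-time bound as known, whereas you supply a Matthews-bound justification and add commentary on why Theorems~\ref{thm:spectral_bound} and~\ref{thm:expanders log(n)-girth even deg} fall short; both are fine additions but not substantive departures.
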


    \begin{proof}
    The proposition follows from the following lemma.

    \begin{lemma}\label{lemma:SRW cover time bound}
        Let $G = (V,E)$ be a graph whose vertices have even degrees.
        Suppose that for the graph $G$ the expected \emph{vertex} cover time of SRW
        is $C$. Then, the expected \emph{edge} cover time of GRW of $G$ is at most
        \[
            \E[C_E(G)] \leq |E| + C.
        \]
    \end{lemma}
    Since the number of edges in $Q_d$ is $|E|=\frac{1}{2} d \cdot 2^d$,
    and using the fact that the expected vertex cover time of the hypercube by SRW
    is $C = O(d \cdot 2^d)$, Proposition~\ref{prop:hypercube} follows
    by Lemma~\ref{lemma:SRW cover time bound}.
    \end{proof}

    We now turn to the proof of Lemma~\ref{lemma:SRW cover time bound}.

    \begin{proof}[Proof (of Lemma~\ref{lemma:SRW cover time bound})]
        The proof proceeds by coupling between a SRW and a GRW so that
        the number of steps made by the GRW is larger than the number of
        steps made by SRW by at most $|E|$.

        As observed above, in Observation~\ref{obs:greedy part cycles}
        for graphs of even degrees we have $X_{t_i} = X_{s_{i+1}}$ for all $i \leq k$,
        i.e., every greedy part finishes at the same vertex that it started from.
        This implies that the simple parts can be concatenated,
        as the end of the $i$th simple part is $X_{t_i}$, and the
        beginning of the $(i+1)$th part is $X_{s_{i+1}}$.
        The coupling between the SRW and the GRW is the natural one, where the SRW
        performs all the steps that the GRW makes in its simple parts.
        Clearly, the number of steps made by the GRW is larger than the number of
        steps made by SRW by at most the total number of steps made in the greedy parts,
        which is bounded by $|E|$.

        Observe that whenever the SRW reaches some vertex $v$, it is either the case that
        (1) all edges adjacent to $v$ have already been covered by GRW, or
        (2) the vertex $v$ is the last vertex in the current simple part,
        and thus, using the property $X_{t_i} = X_{s_i}$ for all $i$,
        the next greedy part will cover all edges adjacent to $v$.
        This implies that by the time the SRW covers all vertices of $G$, the GRW
        has either already covered all edges of $G$, or will do so in the number greedy part.
        Therefore, the \emph{edge} cover time of GRW is larger than
        the \emph{vertex} cover time of SRW by at most $|E|$.
        This completes the proof of the lemma.
    \end{proof}

    We also remark (without a proof) on the edge cover time
    of a generalization of the hypercube graph.
    \begin{remark}
    Define a generalization of the hypercube by
    connecting two vertices in $\{0,1\}^d$ if the distance between them
    is at most some parameter $\ell \geq 2$.
    Specifically, for $\ell \geq 2$, let $H_d^{(\leq \ell)}=(V,E_\ell)$,
    where $V = \{0,1\}^d$ and $(x,y) \in E$ iff $d(x,y) \leq \ell$.
    Denoting the number of vertices in the graph by $n = 2^d$,
    the spectral radius of $H_d^{(\leq \ell)}$ is bounded
    from above by $\lambda \leq 1 - \frac{\ell}{\log n}$.
    Therefore, by Theorem~\ref{thm:spectral_bound} for $\ell \geq 2$
    the expected edge cover time of GRW on $H_d^{(\leq \ell)}$
    is $|E_\ell| + O ( n \log^2 n )$,
    where the constant in the $O()$ notation depends on $\ell$.

    Noting that the number of edges in $H_d^{\leq \ell}$ is
    $|E_\ell| = O(n \cdot \log^{\ell}n)$, this implies that
    for $\ell = 2$ the edge cover time is linear in
    the number of edges $|E_2| = O(n \cdot \log^{2}n)$,
    and for $\ell \geq 3$ the edge cover time is $(1+o(1)|E_\ell|$.
    \end{remark}
%%%%%%%%%%%%%%%%%%%%%%%%%%%%%%%%%%%%%%%%%%%%%%%%%%%%%%%%%%%%%%%%%%%%%%%%%%%%%%%%%%%%%%%%%%
\subsection{$d$-regular trees}\label{subsec:d-regular_tree}
%%%%%%%%%%%%%%%%%%%%%%%%%%%%%%%%%%%%%%%%%%%%%%%%%%%%%%%%%%%%%%%%%%%%%%%%%%%%%%%%%%%%%%%%%%

    In this section we provide an upper bound for the edge cover time of GRW on trees.
    We are able to describe the behavior of GRW quite accurately, and subsequently
    provide a tight bound on the cover time.

    \begin{theorem}\label{thm:trees}
        Let $G = (V,E)$ be a tree rooted at a vertex denoted by $r$, such that $\deg(r) \geq 2$.
        For any $v \in V$ denote by $T_v$ the subtree rooted at $v$
        and let $|T_v|$ denote the number of edges in $T_v$.
        Then, for any rule $\RR$ the GRW edge cover time of $G$ is
        \[
            \E[C_E(G)] = |E| + O \left( \sum_{u \in G \setminus \{r\}} |T_u| \right).
        \]
        If the rule for GRW is $\RRand$, then there is a matching lower bound, namely
        \[
            \E[C_E(G)] = |E| + \Theta \left( \sum_{u \in G \setminus \{r\}} |T_u| \right).
        \]
    \end{theorem}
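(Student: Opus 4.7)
By Proposition~\ref{prop:formula} it suffices to control $\sum_{i=1}^n \E[t_i - s_i]$, the total expected length of the simple parts. The tree hypothesis brings one decisive structural feature: since $G$ has no cycles, during any greedy part the walker traces a \emph{simple path} (a repeated vertex would force the reuse of an edge), so each greedy part starts at a previously stuck vertex and walks along a simple path until it ends at a vertex whose other incident edges are all covered.

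For the upper bound, I plan to estimate each $\E[t_i - s_i \mid B_i, X_{s_i}]$ using the classical SRW hitting-time formula on trees: if we re-root the tree at the target vertex $u$, then $\E[H(v,u)] = \sum_{x \in P(v,u) \setminus \{u\}} (2|T_x^{(u)}| - 1)$, a sum of subtree sizes along the path from $v$ to $u$. Conditioned on $B_i$ and $X_{s_i}$, the $i$th simple part is exactly a SRW started at $X_{s_i}$ run until it hits the boundary of $B_i$, so this formula expresses $\E[t_i - s_i \mid B_i, X_{s_i}]$ as a sum of subtree sizes along the path traversed by the SRW to the boundary. Summing these contributions across $i = 1, \ldots, n$ and re-indexing as a sum over non-root vertices $u$ (using the nested growth $B_1 \subsetneq B_2 \subsetneq \cdots$ and the simple-path structure of greedy parts to prevent any subtree from being counted more than a bounded number of times) should yield $\sum_i \E[t_i - s_i] = O\!\left(\sum_{u \neq r}|T_u|\right)$.

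For the matching lower bound under $\RRand$, I would argue that the uniformly random greedy choice inside each subtree $T_u$ causes the walker, with constant probability, to finish its greedy exploration of $T_u$ stuck at a vertex deep inside an already-covered region of $T_u$. The ensuing simple part must then perform a SRW across that covered region before reaching any vertex with an unvisited edge, and the same tree hitting-time formula gives an expected cost of $\Omega(\sum_{w \in T_u} |T_w|)$ aggregated over the vertices the SRW traverses inside $T_u$. Arranging these excursions so that different subtrees $T_u$ contribute essentially disjoint summands, and aggregating over the non-root vertices, gives $\Omega\!\left(\sum_{u \neq r}|T_u|\right)$.

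The chief obstacle is the self-interacting nature of GRW: both $B_i$ and $X_{s_i}$ depend on the full history of the walk and, for the upper bound, on an arbitrary (possibly adversarial) rule $\RR$. The simple-path structure of greedy parts and the strictly nested growth of the $B_i$'s are the essential tools for controlling how many times any given subtree contributes to the simple-time accounting in the upper-bound argument; for the lower bound the key difficulty is to show that $\RRand$ \emph{typically} produces costly stuck configurations deep inside each subtree rather than efficient ones near the boundary, so that the hitting-time contributions genuinely add up to $\Omega(\sum_{u \neq r}|T_u|)$ and not less.
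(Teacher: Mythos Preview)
Your overall strategy---apply the key formula and bound each $\E[t_i-s_i\mid B_i,X_{s_i}]$ via SRW hitting times on trees---matches the paper's, but you are missing the structural description of GRW on a tree that actually makes both directions go through cleanly. The point is not merely that greedy parts are simple paths; it is that they always descend. When a greedy part starts at $X_{t_i}$, the edge to its parent has already been traversed (that is how the walker arrived there earlier), so the only uncovered incident edges lead to children, and inductively the walker moves strictly downward into previously unvisited territory until it reaches a leaf. Consequently the walk gets stuck exactly once at each leaf, and $X_{t_i}$ is precisely the lowest ancestor of the leaf $X_{s_i}$ that still has an uncovered child. From this one sees that the paths $P(X_{s_i},X_{t_i})$ for $i<k$ are pairwise \emph{edge-disjoint}, and that their union is $E\setminus P(r,\ell)$, where $\ell=X_{s_k}$ is the last leaf covered. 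Combined with $H(u,\mathrm{parent}(u))=2|T_u|+1$, this gives
\[
\sum_{i<k}\E[t_i-s_i]\;=\;\sum_{(u,v)\in E\setminus P(r,\ell)}H(u,v)\;\le\;\sum_{u\neq r}(2|T_u|+1),
\]
which is the upper bound. Your ``bounded overcounting via nested $B_i$'s'' is a weaker substitute for this exact disjointness, and you have not said why the bound on the multiplicity would be $O(1)$ rather than something growing.

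For the lower bound the paper's argument is much simpler and more precise than what you sketch. Because the simple-part paths cover \emph{exactly} $E\setminus P(r,\ell)$, one only needs to show that each edge lies on $P(r,\ell)$ with probability at most $1/\deg(r)$ under $\RRand$: the subtrees hanging off $r$ are explored in a uniformly random order, so a fixed edge is on the root-to-last-leaf path only if its subtree is explored last. Linearity of expectation then gives $\E\sum_i(t_i-s_i)\ge(1-\tfrac{1}{\deg(r)})\sum_{u\neq r}2|T_u|$. Your proposed route---arguing that $\RRand$ typically leaves the walker stuck ``deep'' inside each $T_u$ and then summing $\Omega(\sum_{w\in T_u}|T_w|)$ contributions across $u$---would dramatically overcount (a leaf $w$ lies in $T_u$ for every ancestor $u$), and it is not clear how you would avoid this without essentially rediscovering the disjointness/complement structure above.
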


    The following corollary in immediate from Theorem~\ref{thm:trees}.
    \begin{corollary}\label{cor:d-reg trees}
        If $G$ is a $d$-regular tree with $n$ vertices, then the expected edge cover time is $O(n \log_d n)$.
    \end{corollary}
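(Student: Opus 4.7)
The plan is to invoke Theorem~\ref{thm:trees} directly and evaluate the combinatorial sum $\sum_{u \neq r} |T_u|$ by a level-by-level count. First, I would fix the root $r$ to be any internal vertex, so that $\deg(r) = d \geq 2$ and the hypothesis of Theorem~\ref{thm:trees} is satisfied. Let $h$ denote the resulting height of the rooted tree; by $d$-regularity the number of vertices at depth $i$ is $\Theta((d-1)^i)$, and summing over levels gives $n = \Theta((d-1)^h)$, so that $h = \Theta(\log_d n)$.

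Next, for a vertex $u$ at depth $i$ from $r$, the subtree $T_u$ is itself a rooted tree of height at most $h-i$ whose internal vertices have $d-1$ children, so $|T_u| = \Theta((d-1)^{h-i})$. Summing level by level,
\begin{equation*}
\sum_{u \in V \setminus \{r\}} |T_u| \;=\; \sum_{i=1}^{h} \Theta((d-1)^i) \cdot \Theta((d-1)^{h-i}) \;=\; \Theta(h \cdot (d-1)^h) \;=\; \Theta(n \log_d n).
\end{equation*}
Plugging this into Theorem~\ref{thm:trees} and absorbing the trivial term $|E| = n - 1 = O(n \log_d n)$, we conclude $\E[C_E(G)] = O(n \log_d n)$, as desired.

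There is essentially no real obstacle beyond the above bookkeeping; the only minor subtlety is pinning down the interpretation of ``$d$-regular tree with $n$ vertices'' (with the leaves being the only vertices of degree less than $d$, and $d\geq 3$ so that the bound $n\log_d n$ is actually smaller than $n^2$). For any reasonable reading, the level profile is $\Theta((d-1)^i)$ and the computation above is robust to the precise definition.
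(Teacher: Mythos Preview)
Your proposal is correct and is exactly the approach the paper has in mind: the paper simply declares the corollary ``immediate from Theorem~\ref{thm:trees}'' and gives no further argument, so you are just supplying the level-by-level bookkeeping for $\sum_{u\neq r}|T_u|$ that the paper leaves implicit. Your computation and the handling of the root/leaf convention are both fine.
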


    Comparing Corollary~\ref{cor:d-reg trees}
    to the cover time of SRW on $d$-regular trees, we again see an asymptotic speed-up
    over the $\Theta (n \log^2_d n )$ time of the SRW \cite{aldous1991random}.

    \begin{proof}
        In order to use the tree structure of the graph, let us first
        give an overview of the behavior of GRW on trees.
        The walker starts at the root $r$ and goes down greedily
        (i.e., an unvisited edge is traversed in every new step), until reaching a leaf.
        Since it got stuck at a leaf, it performs a simple random walk until reaching
        its lowest ancestor with an adjacent edge that has not been covered yet.
        The non-covered edge is necessarily from the ancestor to one of its children,
        (as its parent has been already visited on the way down).
        The walker continues by moving down greedily until reaching another
        leaf not covered thus far, and then performs an SRW until reaching again
        its lowest ancestor with a child that has not been visited thus far by the walk.
        The walk continues in the same manner until covering all edges,
        getting stuck only at the leaves. In fact the walk gets stuck exactly
        once in each leaf, and the time $C_E(G)$ is the time when walker visits
        the last leaf of the tree. Note that when visiting some vertex $v$, the walk will
        cover the entire subtree of $v$ before returning to $v$'s parent.
        This property is what makes the cover time of GRW asymptotically faster
        than the cover time of SRW.

        The order in which the vertices are visited for the first time
        defines some preorder traversal on the tree (first the root, then the subtrees),
        where for each vertex the order of the subtrees is chosen according to the rule $\RR$.
        We observe that the vertices $(X_{s_1},X_{s_2},\dots, X_{s_k})$ define some order on the leaves of the tree,
        induced by the preorder traversal as described above
        (and in particular, $k$ equals to the number of leaves).
        In addition, for every $i < k$, the vertex $X_{t_i}$ is the lowest ancestor of $X_{s_i}$
        such that at time $s_i$ not all of its descendants have been visited by the walk.
        Hence, $\E[t_i - s_i]$ equals the expected time it takes for the simple random walk starting at $X_{s_i}$ to visit
        this ancestor.
        This implies that for every edge $(u,v)$, where $u$ is the parent of $v$,
        there is at most one $i \in \{1,\dots,k\}$
        such that the edge $(u,v)$ lies on the shortest path from $X_{s_i}$ to $X_{t_i}$.
        Therefore, if $w$ is the leaf where the walk got stuck for the $i$th time, that is, $X_{s_i} = w$,
        and $v$ is its lowest ancestor whose subtree is not covered yet, then the expected time to reach
        $v$ staring from $w$ is
        \[
            \E[t_i - s_i] = H(w,v) = \sum_{(u_1,u_2) \in P_{(w,v)}} H(u_1,u_2),
        \]
        where $H(x,y)$ denotes the expected number of steps required for SRW starting at $x$
        to visit $y$, and the sum is over all edges on the shorted path from $w$ to $v$
        (using the convention that the edge $(u_1,u_2)$ means that $u_2$ is a parent of $u_1$.

        Going over all leaves in in the graph, and using the observation
        that the walk gets stuck in each leaf exactly once (stopping at the last
        visited leaf at time $s_k$), and finishing the corresponding simple part
        at the lowest ancestor whose tree has not been covered yet, we observe that
        for each $i < k$ the shortest paths from $X_{s_i}$ to $X_{t_i}$
        are disjoint. Furthermore, the union of all these paths covers all edges of the graph
        except for the path from the last covered leaf, denoted by $l = X_{s_k}$, to the root of the tree.
        Let us denote by $P_{(r,l)}$ be the shortest path from $l$ to $r$.
        Then
        \begin{equation}\label{eq:sumH(v,u)}
            \E \left[\sum_{i=1}^k (t_i - s_i) \right] = \E \left[ \sum_{(u,v) \in E \setminus P_{(r,l)}} H(u,v)  \right]
                \leq \sum_{(u,v) \in E} H(u,v),
        \end{equation}
        where $H(v,u)$ denotes the expected number of steps, required for SRW starting at $v$ to visit $u$ for the first time,
        and the summation is over all edges $(u,v)$, where $v$ is the parent of $u$.

        It is well known (see e.g., \cite[Lemma 1]{AKLLR79}) that
        if $(u,v)$ is an edge in a tree, then
        \[
            H(u,v) = 2|T_u| + 1 \leq 3|T_u|.
        \]
        Proposition~\ref{prop:formula} together with~\eqref{eq:sumH(v,u)}, proves the upper bound of the theorem.

        \medskip

        If GRW uses the rule $\RRand$,
        then the subtrees rooted at the children of $r$
        are explored completely one after another (the order of the children is random),
        and the walk will return to $r$ from all but the last subtree.
        Therefore, for each $u$ child of $r$ the subtree rooted at $u$
        is completely explored by GRW with probability $\frac{\deg(r)-1}{\deg(r)}$,
        and hence, every edge of the tree belongs to $P_{(r,l)}$ with probability at most $\frac{1}{\deg(r)}$.
        Therefore, by applying the formula in \eqref{eq:sumH(v,u)} we get
        \begin{eqnarray*}
            \E \left[\sum_{i=1}^k (t_i - s_i) \right]
                &   =   &   \E \left[ \sum_{(u,v) \in E \setminus P_{(r,l)}} H(u,v)  \right] \\
                &  \geq &   (1 - \frac{1}{\deg(r)}) \sum_{(u,v) \in E} H(u,v) \\
                &  \geq &   (1 - \frac{1}{\deg(r)}) \sum_{u \in G \setminus \{r\}} 2|T_u|.
        \end{eqnarray*}
        This completes the proof of the theorem.
    \end{proof}

%%%%%%%%%%%%%%%%%%%%%%%%%%%%%%%%%%%%%%%%%%%%%%%%%%%%%%%%%%%%%%%%%%%%%%%%%%%%%%%%%%%%%%%%%%
\section{Greedy Random Walk on $\Z^d$}\label{sec:Z^d}
%%%%%%%%%%%%%%%%%%%%%%%%%%%%%%%%%%%%%%%%%%%%%%%%%%%%%%%%%%%%%%%%%%%%%%%%%%%%%%%%%%%%%%%%%%

    In this section we study the behavior of GRW on infinite graphs.
    Specifically we ask whether the walk is recurrent or transient in different graphs.
    Obviously, on $\Z$ GRW visits every vertex at most once.
    We show that for $d \geq 3$ the greedy random walk on $\Z^d$ is transient.

    \begin{theorem}\label{thm:Z^d}
        Let $G = (V,E)$ be an infinite graph, where all its vertices are of even degree.
        If the simple random walk on $G$ is transient, then for any rule $\RR$
        the greedy random walk is also transient.

        In particular for $d \geq 3$, the greedy random walk on $\Z^d$ returns
        to the origin only finitely many times almost surely.
    \end{theorem}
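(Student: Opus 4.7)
The plan is to extend the coupling from the proof of Lemma~\ref{lemma:SRW cover time bound} to the infinite setting. Fix a vertex $o \in V$ (the ``origin''); I decompose the visits of GRW to $o$ into those occurring during greedy time parts and those occurring during simple time parts, and bound each contribution separately.

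For the greedy contribution, I use Observation~\ref{obs:greedy part cycles}: since all degrees are even, every greedy part is a closed trail, so each passage of the walker through $o$ during a greedy part consumes at least one previously unvisited edge incident to $o$ (one to enter and, unless the greedy part is anchored at $o$, one to leave). Since $\deg(o) < \infty$, the total number of visits to $o$ across all greedy time parts is bounded deterministically by $O(\deg(o))$, regardless of whether the individual greedy parts are finite or infinite in length.

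For the simple contribution, I couple GRW with a SRW $Y$ on $G$. Reserve a fresh sequence $(U_k)_{k \geq 1}$ of i.i.d.\ uniform variables to drive the simple steps of GRW, and use independent randomness for the rule $\RR$. Setting $Y_0 = X_0$, define $Y_{k+1}$ to be the neighbor of $Y_k$ selected by $U_{k+1}$. By Observation~\ref{obs:greedy part cycles}, every greedy excursion starts and ends at the same vertex, so the walker's position at the start of the $(k{+}1)$th simple step agrees with its position at the end of the $k$th. Thus the $k$th simple step of GRW moves the walker from $Y_{k-1}$ to $Y_k$, making $Y$ a bona fide SRW on $G$ whose transitions are uniform over neighbors independently of the past. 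Every visit of GRW to $o$ occurring in a simple time part corresponds to a visit of $Y$ to $o$, and the latter set is almost surely finite by the assumed transience of SRW.

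Combining the two bounds gives $|\{t \geq 0 : X_t = o\}| < \infty$ almost surely, proving transience. The specialization to $\Z^d$ with $d \geq 3$ is then immediate since $\Z^d$ is $2d$-regular and SRW on $\Z^d$ is transient. The main subtlety to watch for is the regime in which GRW eventually enters a single infinite greedy excursion, so that only finitely many simple steps occur and the coupling says nothing about large times; but this regime is already subsumed by the greedy bound, so the two bounds together exhaust all cases. The even-degree hypothesis is used essentially throughout, since without it a greedy excursion could terminate at a vertex different from where it started and the concatenation of simple steps would fail to be a SRW.
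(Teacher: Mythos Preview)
Your proof is correct and is essentially the paper's argument: bound the greedy-time visits to $o$ by $\deg(o)$ using the fact that each such visit consumes a fresh incident edge, and use the even-degree observation $X_{t_i}=X_{s_{i+1}}$ to concatenate the simple-time steps into a single SRW, which by hypothesis is transient. The paper organizes the same ingredients as an explicit three-way case split on which of $s_k,t_k$ is the first to equal $+\infty$, whereas you bound the greedy and simple contributions directly and absorb the infinite-greedy-excursion case into the greedy bound; the mathematical content is identical.
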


    \begin{proof}
    Partition the time $[0,+ \infty ]$
    into two types of parts, greedy parts and simple parts,
    by defining times $t_0 = 0,s_1,t_1,s_2,t_2,\dots\in\N\cup\{+\infty \}$ as follows:
    \[
        t_0  =  0
    \]
    \[
        s_{i+1}  =
        \begin{cases}
                \inf  \{t_i \leq t <+\infty  : J_t(X_t)  = \emptyset\} & \mbox{ if there is such $t$} \\
                +\infty                 & \mbox{ otherwise }
        \end{cases}
    \]

    \[
        t_{i+1}  =
        \begin{cases}
                \inf \{s_{i+1} \leq t <+\infty : J_t(X_t)  \neq \emptyset\} & \mbox{ if there is such $t$} \\
                +\infty & \mbox{ otherwise.}
        \end{cases}
    \]
    (Analogous partition underlies the results in Section \ref{sec:edge_cover_time}.
    The difference here is that the times can have the value $+\infty$.)

    For the reader's convenience we restate Observation~\ref{obs:greedy part cycles} adapted for the case of infinite graphs.
    \begin{observation}\label{obs:greedy part cycles-infinite}
        If all degrees of a graph $G= (V,E)$ are even and $s_{i+1} < \infty$,
        then $X_{t_i} = X_{s_{i+1}}$.
    \end{observation}

        Assume that the event that $s_i$ or $t_i$ equals $+\infty$ for some $i \geq 1$
        and $t_k$ is the first such time has a positive probability.
        Conditioning on this event, the walk remains in a simple part starting from time $s_k$,
        and hence performs a simple random walk from this time onwards.
        Since SRW is transient on $G$, the walk will return to $X_0$ only finitely many times a.s..
        Actually, since the random range $R=\{X_t: 0\leq s_k\}$ is finite, and the SRW is transient, conditioning on $R$,
        the SRW will leave $R$ in finite time a.s., and so $t_k$ is a.s. finite, contradicting the assumption.

        Similarly, if the event that $s_i$ or $t_i$ equals $+\infty$ for some $i \geq 1$
        and $s_k$ is the first such time has a positive probability,
        then, conditioning on this event, the walk is in a greedy part from $t_{k-1}$ onwards.
        In other words, from time $t_{k-1}$ onwards the walker crosses each edge at most once.
        Hence, as the degree of $X_0$ is finite, the maximal number of returns to $X_0$
        is at most $\frac{\deg(X_0)}{2} + \frac{t_{k-1}}{2}$, and in particular a.s.\ finite.

        Assume now that the event that $s_i,t_i<+\infty$ for all $i \geq 1$ has a positive probability, and condition on this event.
        Using the assumption that all vertices of the graph have even degrees
        it follows from Observation~\ref{obs:greedy part cycles-infinite}
        that $X_{t_i} = X_{s_{i+1}}$ for all $i \geq 0$.
        Therefore, for all $i\geq 0$ the walk in time segments $[s_i,t_i]$ and $[s_{i+1}, t_{i+1}]$
        can be concatenated.
        Hence, the walk restricted to time $\bigcup_{i\geq 0} [s_i,t_i]$ is distributed as a SRW on $G$,
        and so, by transience, returns to $X_0$ finitely many times almost surely.
        Since in the overall greedy parts the walker can visit $X_0$ at most $\frac{\deg(X_0)}{2}$ times,
        the entire walk returns to $X_0$ finitely many times a.s.
    \end{proof}

    Note that we strongly used the fact that all vertices in our graph have even degree.
    The following proposition shows a similar result by slightly relaxing this assumption.
    \begin{proposition}
        Let $G = (V,E)$ be a graph obtained from $\Z^3$ by removing
        at most $r^{1 - \eps}$ edges from any box of radius $r$ around the origin for some $\eps > 0$.
        Then the greedy random walk on $G$ is transient.
    \end{proposition}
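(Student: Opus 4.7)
The plan is to adapt the proof of Theorem~\ref{thm:Z^d} to accommodate the sparse set of odd-degree vertices created by removing edges from $\Z^3$. The first step is to verify that simple random walk on $G$ itself is transient. Since at most $r^{1-\eps}$ edges are removed from any ball of radius $r$ and $\eps > 0$, the removed edges have vanishing density. A standard electrical-network construction takes the canonical unit flow from the origin to infinity on $\Z^3$ and reroutes it locally around each missing edge; the sparsity hypothesis ensures that the total additional energy per annulus $\{r \le |x| < 2r\}$ is summable, so the effective resistance from the origin to infinity in $G$ remains finite and SRW on $G$ is transient.

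Next, I would follow the case analysis of Theorem~\ref{thm:Z^d}, partitioning time into greedy parts $[t_{i-1},s_i]$ and simple parts $[s_i,t_i]$. The degenerate subcases where either $s_i=+\infty$ or $t_i=+\infty$ for some $i$ are handled verbatim: in the first the walker performs only greedy steps after some time and returns to the origin at most $\deg(X_0)/2$ times; in the second the walker performs only SRW steps on $G$ after some time and returns to the origin finitely often by Step 1. The heart of the argument is the remaining case where all $s_i,t_i$ are finite. Here the key new ingredient is the following strengthening of Observation~\ref{obs:greedy part cycles-infinite}: \emph{whenever a greedy part is a non-cycle, i.e.\ $X_{t_{i-1}} \ne X_{s_i}$, both of its endpoints have odd degree in $G$}. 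The proof is by a parity induction. For each vertex $v$, let $e_v(t)$ denote the number of edges incident to $v$ covered by time $t$; in a cycle greedy part the contribution to $e_v$ at any vertex of the trail is even, and in a non-cycle greedy part it is even at intermediate vertices and odd at the two endpoints. Hence $e_v \bmod 2$ equals the number of times $v$ has previously been a non-cycle endpoint, mod $2$. For $v$ of even degree, being a non-cycle endpoint requires $e_v \equiv \deg(v)-1 \pmod 2$, i.e.\ odd, meaning $v$ has previously been a non-cycle endpoint an odd number of times---which by unrolling the induction from $e_v=0$ is impossible.

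Given this observation, the simple parts concatenate into a walk on $G$ that performs SRW transitions, interrupted only by ``jumps'' between pairs of odd-degree vertices. Each odd-degree $v$ can appear as a jump endpoint at most $\deg(v) \le 6$ times, since each such occurrence covers at least one new edge incident to $v$; and by hypothesis there are at most $2r^{1-\eps}$ odd-degree vertices in the box of radius $r$, so the jumps are very sparse. I would conclude by a coupling argument with a reference SRW $Z$ on $G$ that matches the GRW through each simple chunk and bridges each jump $X_{t_i} \to X_{s_{i+1}}$ by an additional independent SRW excursion; since almost surely only finitely many jumps involve any fixed ball, after a finite time the walker's simple activity inside that ball coincides with a SRW on $G$ and hence visits the origin finitely often. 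Combined with the $\deg(X_0)/2$ bound on greedy-part returns, this yields transience.

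The main obstacle is carrying out this coupling rigorously: the jump destinations depend on the full history of the GRW, so the coupling with $Z$ must be constructed adaptively with respect to the natural filtration, and one must quantitatively control the cumulative displacement introduced by the bridging SRW excursions. The hypothesis $\eps > 0$ (rather than $\eps=0$) should be crucial here, entering through a Borel--Cantelli style estimate that bounds the total displacement of $Z$ relative to the GRW in any finite region.
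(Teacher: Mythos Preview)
Your key lemma --- that both endpoints of a non-cycle greedy part have odd degree --- is false. The parity bookkeeping you set up is correct ($e_v \bmod 2$ equals the number of prior non-cycle endpoint occurrences of $v$), but the assertion ``being a non-cycle endpoint requires $e_v \equiv \deg(v)-1 \pmod 2$'' applies only to the \emph{terminal} endpoint $X_{s_i}$, where the walker gets stuck and so $e_v = \deg(v)$ afterward. There is no such constraint on the \emph{initial} endpoint $X_{t_{i-1}}$, which is merely the first vertex with an unvisited edge reached by the preceding simple part. The origin itself is already a counterexample whenever the first greedy part fails to close up; more generally, after the walk gets stuck at an odd-degree vertex $a$, the ensuing simple part can land at an even-degree vertex $c$ with $e_c$ even and strictly below $\deg(c)$, and the next greedy part from $c$ may then terminate at a second odd-degree vertex $b$, making $c$ an even-degree non-cycle endpoint.

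The paper circumvents this by tracking not individual jumps but the \emph{new start} vertices: those $X_{s_i}$ that were never previously equal to any $X_{t_j}$ with $j<i$. Your parity argument, applied only at terminal endpoints, shows correctly that every new start other than the origin has odd degree, and new starts are automatically distinct since a vertex can be stuck at most once. The simple segments are then partitioned --- not into a single walk with jumps to be bridged, but into several maximal concatenable chains, one per new start --- and each chain is a bona fide SRW on $G$. Since the $r$-ball contains $O(r^{1-\eps})$ odd-degree vertices and a SRW started at distance $r$ hits the origin with probability $O(1/r)$, a dyadic sum over annuli shows that the probabilities that the individual chains ever reach the origin are summable, and first Borel--Cantelli finishes. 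This sidesteps entirely the coupling obstacle you flag at the end: there is no need to glue the chains into one reference walk.
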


    The proof generalizes the concatenation argument of Theorem \ref{thm:Z^d}.
    Unlike the previous proof, which relied on the fact that all vertices had even degrees,
    in our case some vertices have odd degrees.
    Hence it is possible that the simple parts cannot be concatenated into one walk.
    However, we can divide the simple parts into classes,
    such that in each class the parts can be concatenated into one simple random walk.
    The proof uses the fact that if there are $r^{1-\epsilon}$ independent
    simple random walkers started at a box of radius $r$ around the origin,
    then the total number of visits at the origin by all the walkers is almost surely finite.
    \begin{proof}
        We start with a time partition $(t_0 = 0,s_1,t_1,s_2,t_2,\dots)$ as in the proof of Theorem \ref{thm:Z^d}.
        Call a vertex $v$ a \emph{new start} if $v = X_{s_i}$ for some $i$ and
        $X_{t_j} \neq v$ for all $j < i$.
        As in the proof of Theorem \ref{thm:Z^d}, the concatenation argument
        implies that every new start vertex must be either the origin or have an odd degree.

        Consider the walk restricted to the segments $[s_i, t_i]$.
        The indices $i \geq 1$ can be partitioned into classes $C_1, C_2, \dots$
        such that in each class $C_j$ the segments $[s_i, t_i]$, $i \in C_j$,
        can be concatenated into one walk that starts with a new start vertex.
        Namely, if $C_j = \{i_1 < i_2 < i_3 < \dots\}$,
        then $X_{s_{i_1}}$ is a new start and
        $X_{s_{i_2}} = X_{t_{i_1}}, X_{s_{i_3}} = X_{t_{i_2}}, \dots$
        Denoting by $m_j = \min C_j$ we have $X_{s_{m_j}}$ is necessarily
        a new start, and therefore is either the origin or a vertex of odd degree.
        Moreover the times $\{s_{m_j} \}_j$ are all distinct.

        For each $C_j$, restricting the walk to times $\bigcup\{[s_i, t_i] : i \in C_j \}$ gives us a
        simple random walk (possibly finite) starting from $X_{s_{m_j}}$.
        Therefore there are at most $O(r^{1 - \eps})$ simple random walks,
        starting from a box of radius $r$ around the origin.
        Using the fact that a random walk in $\Z^3$ starting from a vertex at distance $r$ from the origin hits it with probability $O(1/r)$,
        we conclude that the sum of probabilities of hitting zero converges, when summing over all random walks.
        More precisely, let $P_v$ be the probability that a SRW starting at $v$ reaches the origin,
        and let $ODD$ be the set of all vertices of odd degree. Then
        \[
            \sum_{v \in ODD}P_v
                = \sum_{n = 1}^\infty \sum_{\substack{v \in ODD \\ 2^{n-1} \leq \norm{v} < 2^n }} P_v
                \leq \sum_n (2^n)^{1-\eps} \cdot O(\frac{1}{2^n}) = O \left( \sum_{n = 1}^\infty 2^{ -\eps n} \right)< \infty.
        \]
        Therefore, by the first Borel-Cantelli lemma, a.s. only finitely many of the walks will reach the origin,
        implying that the GRW on this graph is transient.
    \end{proof}

    As a last application of the concatenation argument,
    we show that for any vertex-transitive graph with even degrees the expected number of edges covered by the GRW in $t$ steps
    cannot be asymptotically smaller than that of the SRW.

    \begin{proposition}\label{prop:range}
        Let $G = (V,E)$ be a vertex-transitive graph with vertices of even degree.
        Denote by $\mathcal{N}_W(t)$ the expected number of edges covered by the walk $W$ in $t$ steps.
        Then $\mathcal{N}_{\textrm{GRW}}(t) \geq \half \cdot \mathcal{N}_{\textrm{SRW}}(t)$.
    \end{proposition}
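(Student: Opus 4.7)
My plan is to construct a coupling between GRW run for $t$ steps and a simple random walk of length $t$ on $G$, in which the edge range of the coupled SRW is at most twice the number of fresh edges used by GRW; the even-degree hypothesis is what makes this coupling possible, via concatenation of the simple parts.

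First, the infinite analogue of Observation~\ref{obs:greedy part cycles-infinite} tells us that, for even-degree graphs, every greedy part of GRW returns to the vertex where it started. Hence the successive simple parts can be glued into one walk $(Y_j)_{j\ge 0}$ with $Y_0=X_{s_1}$, and a short check using that each simple step is, by definition, a uniform random neighbor of the current vertex---together with the identity $X_{t_i}=X_{s_{i+1}}$---shows that $(Y_j)$ has the law of an SRW started at $X_{s_1}$. Writing $g(t)$ and $s(t)=t-g(t)$ for the number of greedy and simple steps performed by GRW in its first $t$ steps, I would then define a length-$t$ walk $(Z_j)_{j=0}^{t}$ by $Z_j=Y_j$ for $j\le s(t)$ and, for $j>s(t)$, by an independent SRW continuation from $Y_{s(t)}$. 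A direct check then shows that $(Z_0,\dots,Z_t)$ is distributed as an SRW of length $t$ starting at $X_{s_1}$, so by vertex transitivity $\E\big[|\textrm{range}(Z_0,\dots,Z_t)|\big]=\mathcal{N}_{\textrm{SRW}}(t)$.

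The comparison is then a pointwise bound: the number of distinct edges traversed by GRW in $t$ steps is exactly $g(t)$, since simple steps only traverse edges already covered by earlier greedy steps. For the same reason, the edges used by $(Z_0,\dots,Z_{s(t)})$ are all among the greedy edges of GRW, contributing at most $g(t)$ to the range of $Z$; and the extension steps contribute at most $t-s(t)=g(t)$ new edges. Hence $|\textrm{range}(Z_0,\dots,Z_t)|\le 2g(t)$, and taking expectations gives $\mathcal{N}_{\textrm{SRW}}(t)\le 2\,\mathcal{N}_{\textrm{GRW}}(t)$, which is the proposition. The subtlest point, and the main obstacle, is verifying that $(Z_j)_{j=0}^{t}$ really is distributed as an SRW even though $s(t)$ is a random time not adapted to $(Y_j)$; this I would handle by splitting on the events $\{s(t)\ge j\}$ and $\{s(t)<j\}$ and using, respectively, the uniform-neighbor property of simple steps (via the even-degree identity) and the independence of the continuation.
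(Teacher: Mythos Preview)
Your argument is correct and takes a genuinely different route from the paper's. The paper conditions on the value $S=s(t)$, records the two lower bounds $\mathcal{N}_{\textrm{GRW}}(t\mid s)\ge t-s$ and $\mathcal{N}_{\textrm{GRW}}(t\mid s)\ge \mathcal{N}_{\textrm{SRW}}(s)$, splits into the cases $s<t/2$ and $s\ge t/2$, and then invokes the subadditivity $\mathcal{N}_{\textrm{SRW}}(t)\le 2\,\mathcal{N}_{\textrm{SRW}}(t/2)$; this last inequality is where vertex transitivity is actually used. Your approach instead manufactures a single SRW $Z$ of full length $t$ and bounds its edge range pointwise by $2g(t)$, which is more direct and, as a bonus, does not use vertex transitivity at all: since the even-degree observation gives $Y_0=X_{s_1}=X_{t_0}=X_0$, your $Z$ already starts at the same vertex as the GRW, so the comparison is between two walks from the same root. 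Regarding the point you flag as the main obstacle: for a deterministic rule $\RR$, the event $\{s(t)\le j\}$ is determined by $(Y_0,\dots,Y_j)$, because all greedy excursions up to and including the one immediately following the $j$-th simple step are functions of these positions together with $\RR$; hence $s(t)$ is a genuine stopping time for the natural filtration of $Y$ and the strong Markov property applies directly (for random rules, condition on the rule's randomness first). An even cleaner way to sidestep the issue is to reverse the order of construction: fix in advance an infinite SRW $(Y_j)_{j\ge 0}$ from $X_0$ and couple GRW so that its $j$-th simple step traverses the edge $\{Y_{j-1},Y_j\}$; then $Z=(Y_0,\dots,Y_t)$ is trivially an SRW of length $t$, and your pointwise bound $|\textrm{range}(Z)|\le 2g(t)$ goes through unchanged.
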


    Note that by vertex transitivity, $\mathcal{N}_W(t)$ is independent of the initial vertex.
    \begin{proof}
        Let $S$ be the random variable denoting the number of steps that GRW spent in the simple parts up to time $t$,
        and let $(t-S)$ to be the number of steps spent in the greedy parts.
        Denote by $\mathcal{N}_{\textrm{GRW}}(t | s)$ the expected number of edges covered by GRW conditioned on the event $S = s$.
        Using the fact that all vertices of $G$ have even degrees for every $s \leq t$ we have
        $\mathcal{N}_{\textrm{GRW}}(t | s) \geq \max(t-s, \mathcal{N}_{\textrm{SRW}}(s))$.
        Indeed, $t-s$ is the number of steps
        in the greedy part, and therefore $\mathcal{N}_{\textrm{GRW}}(t | s) \geq t-s$.
        The inequality $\mathcal{N}_{\textrm{GRW}}(t | s) \geq \mathcal{N}_{\textrm{SRW}}(s)$
        follows by coupling using the concatenation argument
        as in the proof of Theorem \ref{thm:Z^d}.

        If $s < t/2$, then $\mathcal{N}_{\textrm{GRW}}(t | s) \geq t/2 \geq \mathcal{N}_{\textrm{SRW}}(t/2)$.
        Otherwise, we have
        $\mathcal{N}_{\textrm{GRW}}(t | s) \geq \mathcal{N}_{\textrm{SRW}}(s) \geq \mathcal{N}_{\textrm{SRW}}(t/2)$.
%        In both cases we get $\mathcal{N}_{\textrm{GRW}}(t | s) \geq \mathcal{N}_{\textrm{SRW}}(t/2)$.
        Note that $\mathcal{N}_{\textrm{SRW}}(t) \leq 2 \cdot \mathcal{N}_{\textrm{SRW}}(t/2)$,
        which holds by the Markov property of SRW on a vertex-transitive graph.
        Averaging over $S$ we get
        $\mathcal{N}_{\textrm{GRW}}(t) = \E[\mathcal{N}_{\textrm{GRW}}(t | S)] \geq \half \mathcal{N}_{\textrm{SRW}}(t)$.
\end{proof}

%%%%%%%%%%%%%%%%%%%%%%%%%%%%%%%%%%%%%%%%%%%%%%%%%%%%%%%%%%%%%%%%%%%%%%%%%%%%%%%%%%%%%%%%%%
\subsection{$\Z^2$ and the mirror model}\label{sec:Z^2}
%%%%%%%%%%%%%%%%%%%%%%%%%%%%%%%%%%%%%%%%%%%%%%%%%%%%%%%%%%%%%%%%%%%%%%%%%%%%%%%%%%%%%%%%%%

    The following observation relating the behavior of GRW on $\Z^2$ to the
    mirror model is due to Omer Angel.

    In the mirror model, introduced by Ruijgrok and Cohen \cite{Ruijgrok}
    a mirror is placed randomly on $\Z^2$ by aligning a mirror along
    either one of the diagonal directions with probability $1/3$ each, or
    placing no mirror with probability $1/3$. A particle moves along the
    edges of the lattice and is reflected by the mirrors according to the law of reflection.
    See, e.g., \cite{BT92} for details.
    A major open problem in this area is to determine whether every orbit is periodic almost surely.
    We claim below that this question is equivalent to determining whether GRW with rule $\RRand$
    is recurrent in $\Z^2$.
    (Recall, in the rule $\RRand$ an edge is chosen uniformly at random among the adjacent unvisited edges
    of the current vertex).

    Let $(X_t)_{t\geq 0}$ be GRW on $\Z^2$ with the rule $\RRand$.
    Then, there exists a coupling between $(X_t)_{t\geq 0}$ and
    the particle motion in the planar mirror model until the first time they return to the origin.
    Indeed, if at time $t \geq 0$ GRW reaches a vertex $X_t$ that we have not visited so far,
    then in both GRW and in the mirror model the next step will be chosen in a non-backtracking
    manner, giving equal probabilities of $1/3$ to each of the adjacent vertices (except for $X_{t-1}$).
    In the mirror model this uniquely defines the alignment of the mirror at vertex $X_t$, and hence the next move of the particle in the next visit to this place, given that the orbit is not periodic: it will go to the unvisited neighboring vertex.
    On the other hand, if at time $t \geq 0$ we reach a vertex $X_t$ that has already been visited previously,
    then the next step is uniquely determined: it is to make a move along the edge that has not been traversed so far.
    This defines a coupling of the two models up to the first returning time to zero.
    \begin{claim}\label{claim:GRW-mirror first return}
        The probability that GRW with the rule $\RRand$ on $\Z^2$ returns to the origin at least once
        is equal to the probability that a particle returns to origin in the planar mirror model.
    \end{claim}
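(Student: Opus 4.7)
The plan is to formalize the coupling sketched in the paragraph preceding the claim and deduce the equality of return probabilities. I would work on a common probability space carrying an independent family $(U_v)_{v \in \Z^2}$ of uniform random variables on a three-element set; each outcome is identified simultaneously with a mirror configuration $\{\emptyset, /, \backslash\}$ at $v$ and with one of the three non-backtracking exit edges from $v$. Both processes start at the origin with the same initial direction. The mirror-model particle is defined by revealing the mirror at $v$ according to $U_v$ the first time the particle reaches $v$ and then following the deterministic mirror dynamics. The GRW with rule $\RRand$ is defined by using $U_v$ at the first visit to $v$ to choose uniformly among the three unvisited non-backtracking exits; note that at a newly visited vertex exactly three incident edges are unvisited, so this is a legitimate realisation of the rule.

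I then proceed by induction on $t$ to show that, as long as neither process has yet returned to the origin by time $t$, the two trajectories $(X_s)_{s\le t}$ and $(Y_s)_{s\le t}$ coincide. The base case is immediate. For the inductive step at $X_t = Y_t$ visited for the first time, the coupled coin $U_{X_t}$ forces identical next moves in both processes. At a revisited vertex, the mirror at $X_t$ is already fixed (it was revealed on the first visit), so the mirror-model exit is deterministic; the task is to argue that GRW's rule $\RRand$ picks the same exit. This reduces to the structural claim that at each such revisit exactly one edge incident to $X_t$ is unvisited by the walk and this edge is precisely the mirror's partner of the current incoming edge, so that $\RRand$ is forced to select it.

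The main obstacle is verifying this structural claim. I would establish first an auxiliary lemma stating that prior to the first return to the origin the mirror trajectory traverses no directed edge twice. Indeed, a repetition of the same directed edge at times $t_1 < t_2$ would imply that the state (position plus direction) at times $t_1$ and $t_2$ coincides, and by the invertibility of the mirror dynamics the entire trajectory from time $0$ onwards would be periodic with period $t_2 - t_1$; since the walker starts at the origin at time $0$, periodicity forces a return to the origin by time $t_2 - t_1 < t_2$, contradicting the pre-return assumption. Granting this lemma, a case analysis on which of the mirror's two pairs of edges has already been used (and in which direction) at each previous visit to $X_t$ shows that the set of edges at $X_t$ already traversed by GRW is determined by the mirror-model history at $X_t$, and that the mirror-prescribed outgoing edge is the unique remaining unvisited one.

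Once the coupling is shown to hold up to and including the first return to the origin, the events $\{\text{GRW returns to the origin}\}$ and $\{\text{mirror particle returns to the origin}\}$ coincide on the common probability space and therefore have equal probability, which is the content of the claim.
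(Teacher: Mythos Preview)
Your approach is correct and is essentially the same coupling the paper sketches in the paragraph immediately preceding the claim; you are simply more explicit about the inductive verification and about what has to be checked at a revisited vertex.

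One point deserves tightening. Your auxiliary lemma rules out repetition of a \emph{directed} edge, but the case that actually threatens the structural claim is when the walk re-enters a previously visited vertex $v$ along the reverse of the first-visit \emph{exit} edge (so the same \emph{undirected} edge is used in the opposite direction). Your stated lemma does not exclude this, and in that scenario the mirror would send the particle back out along the old entry edge, while GRW would still have two unvisited edges to choose from --- so the coupling would break. You can close this either on the mirror side (time-reversibility together with the non-backtracking property forces a palindromic segment that contradicts non-backtracking), or, more simply and in the spirit of the paper, on the GRW side: by the Euler parity argument for even-degree graphs (Observation~\ref{obs:greedy part cycles} in the paper), the first greedy phase of GRW can only terminate at the origin, so prior to the first return to the origin GRW is in greedy mode and never re-uses any undirected edge. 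By the inductive hypothesis the mirror trajectory coincides with GRW up to time $t$, hence it too has used only fresh edges; consequently at a second visit the incoming edge must lie in the mirror's \emph{other} pair, and the mirror-prescribed exit is automatically the unique remaining unvisited edge. With this adjustment your argument goes through and matches the paper's.
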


    From Claim~\ref{claim:GRW-mirror first return} we infer the following theorem.
    \begin{theorem}
        GRW with rule $\RRand$ on $\Z^2$ returns to the origin infinitely often almost surely
        if and only if
        the orbit in the mirror model on $\Z^2$ is periodic almost surely.
    \end{theorem}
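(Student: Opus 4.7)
My plan is to deduce the theorem from Claim~\ref{claim:GRW-mirror first return} together with two additional ingredients: a pigeonhole observation for the mirror model, and an induction that extends the coupling of the Claim through successive returns of the walk to the origin.

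First, I would show that in the mirror model on $\Z^2$, the orbit is periodic almost surely if and only if it visits the origin infinitely often almost surely. One direction is immediate since the origin lies on the orbit. For the converse, a visit to the origin is specified by one of only $4$ possible incoming directions, so any five visits must repeat some incoming direction; determinism of the mirror dynamics then forces the orbit to be periodic from that moment on. Hence $\{\text{orbit periodic}\}$ coincides with $\{\text{orbit visits origin i.o.}\}$ up to null sets, and in fact both events agree with $\{\text{orbit visits origin}\geq 5\text{ times}\}$.

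Second, I would argue inductively that for every $k\geq 1$,
\[
    P\bigl(\text{GRW visits origin at least } k \text{ times}\bigr) \;=\; P\bigl(\text{mirror visits origin at least } k \text{ times}\bigr).
\]
The base case $k=1$ is precisely Claim~\ref{claim:GRW-mirror first return}. For the inductive step, after GRW's first return at time $\tau_1$ the covered edges incident to the origin are in bijection with the mirrors that the mirror particle has already revealed, and the as-yet-unexplored portion of the lattice is still processed in both models by the same uniform non-backtracking rule. Hence the coupling from the proof of the Claim can be restarted at time $\tau_1$. Passing to the limit $k\to\infty$ and combining with the first paragraph yields
\[
    P(\text{GRW visits origin i.o.}) \;=\; P(\text{mirror visits origin i.o.}) \;=\; P(\text{mirror orbit periodic}),
\]
and specializing both sides to value $1$ gives the biconditional of the theorem.

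The main obstacle is the inductive extension of the coupling in the second step. The natural coupling is clean only when every revisited non-origin vertex still has a \emph{unique} uncovered incident edge, so that GRW's random choice there is forced and automatically matches the deterministic mirror reflection. Beyond the first return this need not hold: a revisited vertex may have two uncovered edges, leaving GRW a genuine random choice while the mirror reflection is deterministic. Making the induction rigorous either requires verifying that such ambiguous revisits do not occur along a typical excursion between consecutive returns to the origin (for instance, by analysing the structure of the mirror orbit under the assumption that it has not yet closed into a period), or augmenting the coupling with extra independent randomness that reconciles GRW's uniform rule with the mirror's deterministic reflection; this is where the bulk of the technical work would lie.
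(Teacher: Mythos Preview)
Your pigeonhole reduction in the first step is fine. The gap you flag in the second step, however, is genuine and not a matter of bookkeeping.

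The coupling of Claim~\ref{claim:GRW-mirror first return} does not extend past the first return to the origin in the way you suggest: at that moment GRW chooses uniformly among the two uncovered edges at the origin, while the mirror particle's exit is governed by the mirror there, and these two rules need not agree. More decisively, after the \emph{second} return to the origin all four incident edges are covered and GRW enters a simple phase, performing genuine SRW steps; the mirror particle, by contrast, continues deterministically through already-placed mirrors. No injection of ``extra independent randomness'' identifies a random trajectory with a deterministic one, so your plan to restart the coupling at $\tau_1$ and iterate cannot be carried out, and the asserted equality $P(\text{GRW}\geq k)=P(\text{mirror}\geq k)$ is unsupported for $k\geq 2$.

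The paper takes a different route that avoids extending the coupling through returns. It first shows that GRW returns to the origin infinitely often if and only if every greedy part is finite: an infinite greedy part visits the origin at most twice, while if all greedy parts are finite the simple parts concatenate (Observation~\ref{obs:greedy part cycles-infinite}) into a single SRW on $\Z^2$, which is recurrent. Recurrence of SRW on $\Z^2$ is the key input here and is entirely absent from your outline --- it is precisely what manufactures infinitely many returns during the simple phases, the regime in which your coupling has broken down. The equivalence with the mirror model is then argued one greedy part at a time: at the start of the $i$th greedy part only finitely many mirrors have been revealed, conditioning on them is a positive-probability event, and under the a.s.-periodicity hypothesis the mirror trajectory from $X_{t_i}$ is still a.s.\ periodic; the coupling of the Claim, applied from $X_{t_i}$ rather than from the origin, then shows that this greedy part is a.s.\ finite. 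The converse direction needs only that GRW returns to the origin, which the unextended Claim already delivers.
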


    \begin{proof}
        Note first that GRW on $\Z^2$ returns to the origin infinitely often
        if and only if every greedy part is finite. Indeed, if there is an infinite
        greedy part, then there are finitely many returns to the origin as
        every vertex is visited at most twice in total in all greedy time parts.
        In the other direction, assume that all greedy time parts are finite.
        Then, by the concatenation argument, which follows by Observation~\ref{obs:greedy part cycles-infinite},
        the simple parts form an infinite subsequence distributed as SRW on
        $\Z^2$ starting at the origin. The latter returns to the origin infinitely
        often almost surely, and hence, so does GRW.
        Therefore, it is enough to show that the orbit in the mirror model on $\Z^2$
        is periodic a.s. if and only if every greedy part is finite.

        Suppose first that every orbit in the mirror model on $\Z^2$ is periodic almost surely,
        and suppose that GRW starts the $i$th greedy part at some time $t_i$.
        Then, condition on the $t_i$ steps
        of GRW so far, defines the orientation of the mirrors in
        the vertices visited up till now. Since, the number of visited
        vertices is finite, it follows that the conditioning is on a non-zero event,
        and so the trajectory of the particle starting from $X_{t_i}$ is a.s. periodic.
        Therefore, by considering the coupling between GRW and the mirror
        model conditioned on that event, analogously to Claim~\ref{claim:GRW-mirror first return},
        it follows that with probability 1 the $i$th greedy part is finite.

        Assume now that every greedy part of GRW is finite.
        Note that by translation invariance it is enough to show that trajectory of a single
        particle starting at the origin is periodic almost surely.%
        \footnote{Indeed, if trajectory of a particle starting at the origin is periodic almost surely,
        then, by translation invariance the trajectory of a particle starting at any vertex
        and moving in any direction is periodic almost surely.
        Thus, by placing, 4 particles at each vertex of the graph, and letting them move
        in the 4 possible directions, it follows that with probability 1
        the trajectory of each of them a periodic, as this this event is an intersection
        of countably many probability 1 events.
        Therefore, trajectory of a particle is periodic almost surely if and only if
        all orbits are periodic almost surely.}
        Indeed, since GRW returns to the origin twice a.s.,
        it follows from the coupling in Claim~\ref{claim:GRW-mirror first return} that the trajectory of a
        particle starting at the origin is periodic almost surely, as required.
    \end{proof}

%%%%%%%%%%%%%%%%%%%%%%%%%%%%%%%%%%%%%%%%%%%%%%%%%%%%%%%%%%%%%%%%%%%%%%%%%%%%%%%%%%%%%%%%%%
\section{Remarks and Open Problems}\label{sec:remarks and open problems}
%%%%%%%%%%%%%%%%%%%%%%%%%%%%%%%%%%%%%%%%%%%%%%%%%%%%%%%%%%%%%%%%%%%%%%%%%%%%%%%%%%%%%%%%%%

%%%%%%%%%%%%%%%%%%%%%%%%%%%%%%%%%%%%%%%%%%%%%%%%%%%%%%%%%%%%%%%%%%%%%%%%%%%%%%%%%%%%%%%%%%
\subsection{Conjecture Regarding Theorem~\ref{thm:clique}}\label{sec:conjectures}
%%%%%%%%%%%%%%%%%%%%%%%%%%%%%%%%%%%%%%%%%%%%%%%%%%%%%%%%%%%%%%%%%%%%%%%%%%%%%%%%%%%%%%%%%%

    Observation~\ref{obs:greedy part cycles} used in the proof
    of Theorem~\ref{thm:expanders log(n)-girth even deg}
    seems to be potentially useful for proving stronger
    bounds on the edge cover time of GRW.
    \begin{observation}
        If the all degrees of a graph $G= (V,E)$ are even, then in each greedy time
        part $[t_i,s_{i+1}]$ it holds that $X_{t_i} = X_{s_{i+1}}$, i.e., every greedy part
        ends at the same vertex that it started from.
    \end{observation}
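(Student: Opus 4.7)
My plan is a parity argument on the ``unused degree'' at each vertex. For $w \in V$ and $t \geq 0$ set $u_t(w) = \deg_G(w) - d_t(w)$, where $d_t(w)$ denotes the number of edges incident to $w$ that already lie in $H_t$. Simple parts only revisit used edges and hence leave $u$ unchanged, while each step of a greedy part decreases $u$ at its two endpoints by exactly $1$. The observation will follow from the invariant $(\star)$: $u_{t_i}(w)$ is even for every $w \in V$ and every $i$.

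I would prove $(\star)$ and the observation simultaneously by induction on $i$. The base case $t_0 = 0$ gives $u_0(w) = \deg_G(w)$, which is even by hypothesis. For the inductive step, assume $(\star)$ at time $t_i$ and set $v = X_{t_i}$. Within the $i$th greedy part I would maintain a step-wise invariant: at every intermediate moment either \textbf{(A)} the walker is at $v$ and every $u(w)$ is even, or \textbf{(B)} the walker is at some $y \neq v$ with $u(v)$ and $u(y)$ both odd and every other $u$ even. A short case analysis on whether the next new edge leads to $v$ or to a different vertex shows that each greedy step takes state (A) to (B), and state (B) to either (A) (when the move returns to $v$) or to (B) with a new $y$.

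The crucial consequence is that in state (B), $u(y)$ is odd and nonnegative, hence $u(y) \geq 1$, so the walker always has an unvisited edge to leave $y$ through and cannot possibly get stuck at $y$. Therefore the walker can only get stuck in state (A), giving $X_{s_{i+1}} = v = X_{t_i}$ as required. At the moment the greedy part ends we are in state (A), so all $u$'s are even, and since the ensuing simple part preserves $u$, the invariant $(\star)$ holds at $t_{i+1}$ and the induction closes. The only delicate point, and the main step to get right, is phrasing the two-form invariant (A)/(B) so that every possible transition manifestly preserves it; once that is done, the non-stuck property at vertices $y \neq v$ is an immediate parity observation.
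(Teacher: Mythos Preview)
Your proof is correct and follows the same Euler-path parity idea the paper uses; the paper's justification is the one-line remark preceding the observation (``by an Euler-path type argument if a vertex has even degree and the walker entered this vertex along a new edge \ldots\ then by parity there must be another unvisited edge for the walker to leave''), and your argument is a careful unpacking of exactly that. If anything, your version is more complete: the paper's sentence, read literally, only handles the first greedy part, while for later greedy parts one needs precisely your inductive invariant $(\star)$ that all unused degrees are even at time $t_i$ --- your induction via the (A)/(B) state description makes this explicit where the paper leaves it implicit.
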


    To illustrate how this observation can be useful, let us consider the GRW
    on the complete graph $K_n$. In the argument we only used the assumption
    that the "bad" sets $B_i$ grow at least by one each time, thus allowing
    us to bound the ``overhead'' by $\E \left[ \sum_{i=1}^{k-1}\frac{n-1}{n-|B_i|} \right]
    \leq \E \left[ \sum_{i=1}^{k-1}\frac{n-1}{n-i} \right] \leq n \log(n)$.
    We suspect, however, that in fact the sets $B_i$ grow linearly in $n$,
    as by the time the walker gets stuck for the first time, i.e., visits
    the starting vertex $n/2$ times, we expect that the number of vertices
    that have already been visited $n/2$ times is linear in $n$.
    The situation, however, becomes more complicated when trying to analyze the set
    $B_2$, as it seems to require some understanding regarding the subgraph
    of $K_n$ that has not been covered by the time $s_1$, when the walker got
    stuck for the first time.
    If this is indeed true, and the sets $B_i$ grow linearly in each step,
    we would obtain a stronger bound $\E[\sum(t_i - s_i)] = O(n)$.
    We make the following, rather bold, conjecture.
    \begin{conjecture}
        The expected edge cover time of GRW on $K_n$ is
        \[
            \E[C_E(K_n)] = |E| + \Theta(n).
        \]
    \end{conjecture}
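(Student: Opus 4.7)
The plan is to split the proof into an $\Omega(n)$ lower bound and an $O(n)$ upper bound on the overhead $\E[C_E(K_n)] - |E|$; the former is elementary, the latter conjectural. I focus on the case $n$ odd, so that all vertices of $K_n$ have even degree and Observation~\ref{obs:greedy part cycles} applies.

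For the lower bound, I would first observe that for every $1 \leq i < k$ one must have $n - |B_i| \geq 2$: indeed, if only one vertex $u \notin B_i$ remained then all its neighbours $v$ would lie in $B_i$ and the edges $\{u,v\}$ would already be covered, forcing $u \in B_i$, a contradiction. Therefore $T_{k-1} \geq (n-1)/2$ on the event $\{k \geq 2\}$, and it only remains to show that the greedy walk fails to close up into a complete Eulerian circuit on its very first attempt with probability bounded away from $0$ --- a short combinatorial estimate (e.g.\ lower bounding the chance that the first return of the walk to $X_0$ occurs after traversing only $o(|E|)$ edges).

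For the upper bound, the plan is to sharpen the analysis of Theorem~\ref{thm:clique} by proving that the bad sets grow \emph{geometrically} in $n - |B_i|$: namely, that
\[
    \E\bigl[\, n - |B_{i+1}| \,\bigm|\, B_i\,\bigr] \;\leq\; (1-c)\,(n - |B_i|)
\]
for some absolute constant $c \in (0,1)$ (a one-sided statement holding with probability $\Omega(1)$ per step would equally do). Under such a contraction $\E[n-|B_i|]$ decays like $(1-c)^i$, the number of greedy parts is $O(\log n)$ in expectation, and the sum $\sum_i \frac{n-1}{n-|B_i|}$ becomes a geometric series in $i$ whose largest term is of order $n$ (attained at the last few parts, once $n-|B_i|$ has dropped to $O(1)$); the total is then $O(n)$.

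The structural ingredient one has to establish is as follows. At time $s_i$ every vertex $v \notin B_i$ has been visited some number of times $k_v$ and, never having been the endpoint of a previous greedy part, has an \emph{even} residual degree $n-1-2k_v$ in the graph $G_i$ of uncovered edges. By Observation~\ref{obs:greedy part cycles} the $(i+1)$th greedy part is therefore an Eulerian-type random tour on the component of $G_i$ containing $X_{s_i}$, ending as soon as some vertex of that component becomes exhausted; the key claim to prove is that this tour exhausts a constant fraction of the component's vertices with probability bounded away from zero. The principal obstacle is controlling the random graph $G_i$: a priori it could be very irregular, with a few vertices close to exhaustion while most are barely touched, in which case each tour would finish off only a handful of vertices and we would recover no better than the $O(n\log n)$ overhead of Theorem~\ref{thm:clique}. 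The hope is to use the symmetry of $K_n$ together with an exchangeability or martingale argument on the visit counts $(k_v)_{v\notin B_i}$ to show that these counts concentrate tightly enough that many vertices cross the exhaustion threshold essentially simultaneously. Making this quantitative --- even for the very first greedy part, where $G_0 = K_n$ --- is where I expect essentially all of the work to lie.
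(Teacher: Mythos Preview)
This statement is a \emph{conjecture} in the paper, not a theorem: the authors give no proof, only the heuristic that the sets $B_i$ ought to grow linearly, and a citation to a partial result of Angel and Yaari showing that for odd $n$ the expected number of uncovered edges at time $s_1$ is $O(n)$. There is therefore no proof to compare your attempt against; the relevant question is whether your proposal actually establishes anything.

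Your lower bound contains a direction error. From $n-|B_i|\geq 2$ and $T_i=(n-1)/(n-|B_i|)$ you get $T_{k-1}\leq (n-1)/2$, not $T_{k-1}\geq (n-1)/2$ as you claim. To extract an $\Omega(n)$ contribution from the last simple part you would need $n-|B_{k-1}|$ to be \emph{bounded}, e.g.\ exactly $2$, and nothing you wrote forces that: the sets $B_i$ can jump by more than one, so $|B_{k-1}|$ may well be $n-100$, giving $T_{k-1}=(n-1)/100$. A genuine $\Omega(n)$ lower bound must show either that some $B_i$ has $n-|B_i|=O(1)$ with constant probability, or that $\E[k]=\Omega(n)$, or find the mass elsewhere; none of these follows from your parity observation, and the side claim that $\Pr[k\geq 2]$ is bounded away from $0$ is also left unargued.

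Your upper bound is, as you yourself say, a plan rather than a proof. The contraction inequality $\E[n-|B_{i+1}|\mid B_i]\leq (1-c)(n-|B_i|)$ is precisely the kind of statement the authors say they ``suspect'' but cannot prove, and you have correctly located the obstacle: controlling the residual graph $G_i$ after the first greedy part. The Angel--Yaari result handles a version of the base case $i=0$, but even that does not obviously give the contraction for $i\geq 1$, where $G_i$ is no longer the complete graph and need not be close to regular. In short, your outline matches the paper's heuristic discussion but does not advance beyond it.
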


    An interesting result in this direction is a recent result of Omer Angel
    and Yariv Yaari. They showed that for the complete graph $K_n$ for odd
    values of $n$, i.e., when the graph $K_n$ is of even degree, the expected
    number of unvisited edges in $K_n$ until the first time the walk got stuck
    (i.e., up to time $s_1$) is linear in $n$ \cite{Yaari}.

%%%%%%%%%%%%%%%%%%%%%%%%%%%%%%%%%%%%%%%%%%%%%%%%%%%%%%%%%%%%%%%%%%%%%%%%%%%%%%%%%%%%%%%%%%
\subsection{Rules on Vertices Instead of Edges}\label{sec:vertex-GRW}
%%%%%%%%%%%%%%%%%%%%%%%%%%%%%%%%%%%%%%%%%%%%%%%%%%%%%%%%%%%%%%%%%%%%%%%%%%%%%%%%%%%%%%%%%%
    In this paper we have considered \emph{edge} cover time of graphs, rather
    than \emph{vertex} cover time. This seems to be a natural quantity to
    analyze due to the transition rule of GRW. A na\"{i}ve modification of GRW
    to speed-up the \emph{vertex} cover time is the following. At each step,
    the walker at vertex $v$ picks an unvisited neighbor of $v$ according
    to some rule and jumps there.
    If all neighbors have already been visited, the next move is chosen
    uniformly at random among the neighbors of $v$. For example, it is
    obvious that in the complete graph $K_n$, this walk covers all vertices in $n$ steps.

    Note that, when the walker is allowed to make some local computations at a vertex,
    and each vertex has the information regarding its neighbors, then one can define
    a rule that will force the walk to perform depth first search on the graph,
    by letting each vertex use only the information regarding its neighbors.
    Such walk crosses at most twice each edge of some spanning tree,
    thus visiting all vertices of the graph in less than $2n$ steps.
%%%%%%%%%%%%%%%%%%%%%%%%%%%%%%%%%%%%%%%%%%%%%%%%%%%%%%%%%%%%%%%%%%%%%%%%%%%%%%%%%%%%%%%%%%
\subsection{Open Problems}\label{sec:open-problems}
%%%%%%%%%%%%%%%%%%%%%%%%%%%%%%%%%%%%%%%%%%%%%%%%%%%%%%%%%%%%%%%%%%%%%%%%%%%%%%%%%%%%%%%%%%

    In order to avoid trivialities, in the questions below consider GRW with the rule $\RRand$.

    \begin{enumerate}
    \item
        Given a tight bound for the ``overhead'' of GRW on the complete graph.
        Specifically, is it true that $\E[C_E(K_n)] = {n \choose 2} + \Theta(n)$?

    \item
        Show upper bounds on $C_E(G)$ for other families of graphs.
        One interesting example to look at could be the $d$-dimensional torus.

    \item
        It seems also interesting to analyze the GRW on graphs with power-law
        degree distribution. On such graphs there are hubs of very large degrees
        and when visiting them, the GRW is expected to be efficient.

    \item
        Show that for any transitive graph the expected edge cover time of the
        GRW cannot be asymptotically larger than that of the SRW for any finite graph.
        By Proposition \ref{prop:range}, this is true for vertex-transitive graphs of even degree.

    \item
        Give bounds on the expected \emph{vertex} cover time of the GRW for finite graphs.

    \item
        Give bounds on the expected hitting time of GRW for different graphs.

    \item
        Define GRW mixing time and show that GRW mixing time is as fast as that of SRW.
        Here \cite{ABLS06} is relevant, and also \cite{Madras2005} may be found useful.

    \bigskip
        \noindent
        The remaining problems are regarding recurrence/transience of GRW on infinite graphs.

    \item
        Is GRW on $\Z^2$ recurrent? Is GRW diffusive on $\Z^d$, for all $d \geq 2$?

    \item
        Is GRW on the ladder $\Z \times \Z_2$ recurrent?

    \item
        Prove that GRW is transient on any graph that is roughly isometric to $\Z^3$.
        In particular show it for odd degree lattices.

    \item
        Show that GRW is transient on non-amenable infinite graphs.

    \item
        Consider GRW on a vertex transitive graph. Is there a zero-one law for the following events:
        \begin{enumerate}
            \item[(a)] The walker returns to its initial location infinitely often.
            \item[(b)] The walker returns to its initial location at least once.
        \end{enumerate}
        Is it true that (a) happens almost surely if and only if (b) happens almost surely?
    \end{enumerate}

%%%%%%%%%%%%%%%%%%%%%%%%%%%%%%%%%%%%%%%%%%%%%%%%%%%%%%%%%%%%%%%%%%%%%%%%%%%%%%%%%%%%%%%%%%
\section{Acknowledgements}
%%%%%%%%%%%%%%%%%%%%%%%%%%%%%%%%%%%%%%%%%%%%%%%%%%%%%%%%%%%%%%%%%%%%%%%%%%%%%%%%%%%%%%%%%%
    We are grateful to Itai Benjamini for proposing the model and for many valuable discussions,
    and to Gady Kozma for his useful ideas.
    We thank Shlomo Jozeph for suggesting to use the concatenation argument in Section \ref{sec:Z^d}.
    We also thank Eviatar Procaccia for pointing out Omer Angel's argument relating GRW to the mirror model in $\Z^2$.

%%%%%%%%%%%%%%%%%%%%%%%%%%%%%%%%%%%%%%%%%%%%%%%%%%%%%%%%%%%%%%%%%%%%%%%%%%%%%%%%%%%%%%%%%%

%%%%%%%%%%%%%%%%%%%%%%%%%%%%%%%%%%%%%%%%%%%%%%%%%%%%%%%%%%%%%%%%%%%%%%%%%%%%%%%%%%%%%%%%%%
\appendix
%%%%%%%%%%%%%%%%%%%%%%%%%%%%%%%%%%%%%%%%%%%%%%%%%%%%%%%%%%%%%%%%%%%%%%%%%%%%%%%%%%%%%%%%%%

\end{document}